\newtheorem{thm}{Theorem}
\newtheorem{lem}{Lemma}
\newtheorem{pro}{Proposition}
\newtheorem{cor}{Corollary}
\theoremstyle{remark}
\newtheorem{remark}{Remark}
\numberwithin{equation}{section}
\newcommand{\cal}{\mathcal}
\title{On a functional of Kobayashi for Higgs bundles}
\author[Cardona]{Sergio A. H. Cardona}
\address{Conacyt Research Fellow--Instituto de Matem\'aticas, Universidad Nacional \indent Aut\'onoma de 
M\'exico, Leon 2 altos, Col. centro, 68000, Oaxaca, Mexico}
\email{sholguin@im.unam.mx} 
\author[Meneses]{Claudio Meneses}
\address{Research Fellow--Mathematisches Seminar, Christian-Albrechts Universit\"at \indent zu Kiel, Ludewig-Meyn-Str. 4, 
24118 Kiel, Germany}
\email{meneses@math.uni-kiel.de} 
\subjclass[2010]{Primary 53C07, 53C55, 32C15; Secondary 14J60, 32G13}
\begin{document}

\maketitle 

\begin{abstract}
We define a functional ${\cal J}(h)$ for the space of Hermitian metrics on an arbitrary Higgs bundle over a compact K\"ahler manifold, as a natural generalization of the mean curvature energy functional of Kobayashi for holomorphic vector bundles  \cite{Kobayashi}, and study some of its basic properties. We show that ${\cal J}(h)$ is bounded from below by a nonnegative constant depending on invariants of the Higgs bundle and the K\"ahler manifold, and that when achieved, its absolute minima are Hermite-Yang-Mills metrics. We derive a formula relating ${\cal J}(h)$ and another functional ${\cal I}(h)$, closely related to the Yang-Mills-Higgs functional \cite{Bradlow-Wilkin, Wentworth}, which can be thought of as an extension of a formula of Kobayashi for holomorphic vector bundles to the Higgs bundles setting. Finally, using 1-parameter families in the space of Hermitian metrics on a Higgs bundle, we compute the first variation of ${\cal J}(h)$, which is expressed as a certain $L^{2}$-Hermitian inner product. It follows that a Hermitian metric on a Higgs bundle is a critical point of ${\cal J}(h)$ if and only if the corresponding Hitchin--Simpson mean curvature is parallel with respect to the Hitchin--Simpson connection. \\

\noindent{\it Keywords}: Higgs bundle; Hermite-Yang-Mills metric; K\"ahler manifold.

\end{abstract}


\section{Introduction}

Since their introduction in the early 1980s, Hermite--Yang--Mills metrics have played a fundamental role in multiple subsequent developments in the fields of complex and algebraic geometry \cite{Buchdahl-0, Kobayashi, Siu, Uhlenbeck-Yau}. Kobayashi \cite{Kobayashi 0, Kobayashi} introduced Hermite--Yang--Mills metrics on holomorphic vector bundles over compact K\"ahler manifolds (that he named Hermite--Einstein), as a natural generalization of K\"ahler--Einstein metrics in the tangent bundle of a compact K\"ahler manifold. The fundamental result, that is generically known as {\it the Hitchin--Kobayashi correspondence}, states the equivalence between  the Mumford--Takemoto stability of a vector bundle---an algebraic notion---and the existence of Hermite--Yang--Mills metrics on it, which can be realised as critical points of the Donaldson and Yang--Mills functionals (see \cite{Kobayashi, Lubke} for details). The Hitchin--Kobayashi correspondence was first established in the pionnering works of Donaldson \cite{Donaldson-1, Donaldson-2, Donaldson-3}, Kobayashi \cite{Kobayashi} and Uhlenbeck-Yau \cite{Uhlenbeck-Yau} for holomorphic vector bundles over compact K\"ahler manifolds, and has also been extended to more general contexts, including holomorphic vector bundles over arbitrary compact complex manifolds \cite{Lubke}, and reflexive sheaves over compact K\"ahler manifolds \cite{Bando-Siu}. In fact, Kobayashi \cite{Kobayashi} also noted that for holomorphic bundles on a compact K\"ahler manifold, the $L^{2}$-energy of the mean curvature for the Chern connection of a Hermitian metric 
$h$ leads to yet another functional $J(h)$, whose absolute minima are precisely Hermite--Yang--Mills metrics, whenever they exist. Such a result is closely tied to the more standard descriptions, since it is verified that $J(h)$ differs from the $L^{2}$-energy of the curvature form of the Chern connection of $h$ by a constant depending only on the first and second Chern classes of the vector bundle, as well as the cohomology class of the K\"ahler form. Consequently, studying the functional $J(h)$ turns out to be equivalent to studying the Yang-Mills functional $I(h)$ under a choice of holomorphic structure.\\

In another direction, following the ideas of Narasimhan--Seshadri \cite{Narasimhan-Seshadri} on Mumford stability, and of Atiyah--Bott \cite{Atiyah79,Atiyah-Bott} on Yang--Mills theory, Hitchin \cite{Hitchin} proved a Hitchin-Kobayashi correspondence for pairs $(V,\phi)$ consisting of a rank 2 holomorphic vector bundle over a compact Riemann surface $V\longrightarrow X$ and a morphism of vector bundles $\phi: V\longrightarrow V\otimes\Omega^{1,0}_{X}$, commonly called a Higgs field. Simpson \cite{Simpson} extended Hitchin's results to more general Higgs pairs $(E,\phi)$ of arbitrary rank over a compact K\"ahler manifold $X$, where $\phi:E\longrightarrow E\otimes\Omega^{1,0}_{X}$ is also constrained to satisfy $\phi\wedge\phi=0$.\footnote{The result of Simpson is true even for some non-compact K\"ahler manifolds. Later on, this fact was used by Bando and Siu \cite{Bando-Siu} in order to prove the Hitchin--Kobayashi correspondence for reflexive sheaves.} Using the ideas of Bando and Siu \cite{Bando-Siu}, Biswas and Schumacher \cite{Biswas-Schumacher} extended this correspondence for reflexive Higgs sheaves over compact K\"ahler manifolds. Following the ideas of Donaldson, Simpson  introduced in \cite{Simpson} a Donaldson functional for Higgs bundles over compact (and some non-compact) K\"ahler manifolds. Such a functional can be also introduced using the approach of Kobayashi (for details on this see \cite{Cardona 1, Cardona 2}). However, as far as the authors know, an extension of the Kobayashi functional $J(h)$ to Higgs bundles has not been studied yet in the literature. Since for holomorphic vector bundles $J(h)$ is closely related to the Yang-Mills functional, an extension of $J(h)$ to Higgs bundles is {\it a priori} a functional of interest in complex geometry. The main purpose of this article is to introduce a natural extension for Higgs bundles of the mean curvature energy functional of Kobayashi---which we denote by  ${\cal J}(h)$---and to study some of its basic properties. In particular, we will see that in analogy to the classical holomorphic vector bundle case, the new functional ${\cal J}(h)$ is bounded from below by a nonnegative constant, and that its absolute minima (if achieved) are Hermite--Yang--Mills metrics. Therefore, as a consequence of the Hitchin--Kobayashi correspondence, we conclude that ${\cal J}(h)$ attains its lower bound if and only if the Higgs bundle is Mumford--Takemoto polystable. Using the $L^{2}$-norm of the $(1,1)$-part of the Hitchin--Simpson curvature, we then propose a natural candidate extension ${\cal I}(h)$ of the curvature energy functional $I(h)$, which is closely related to the Yang-Mills-Higgs functional \cite{Bradlow-Wilkin, Wentworth} under a choice of holomorphic structures that define $\mathfrak{E}$, and which is also closely related to ${\cal J}(h)$. However, the formula estimating the difference ${\cal J}(h) - {\cal I}(h)$ is no longer a constant as is the case for $J(h) - I(h)$, but now contains an additional term involving the Higgs field and the Hitchin--Simpson curvature, indicating a fundamental difference between ${\cal J}(h)$ and the Yang--Mills--Higgs functional. We believe that such a difference provides supporting evidence to motivate the study of further features of  ${\cal J}(h)$, such as the description of its critical points, which we perform using curves in the space of Hermitian metrics. In particular, we show that a Hermitian metric is a critical point of ${\cal J}(h)$ if and only if its Hitchin--Simpson mean curvature is parallel with respect to the Hitchin--Simpson connection. Such a result can be seen as a natural extension to Higgs bundles of a result of Kobayashi \cite{Kobayashi} for  $J(h)$ in the classical holomorphic vector bundle case.\footnote{Kobayashi proved in \cite{Kobayashi} that a Hermitian metric is a critical point of $J(h)$ if and only if the corresponding Chern mean curvature is parallel with respect to the Chern connection.}\\

This article is organized as follows. In Section 2 we review some basic facts about Higgs bundles and Hermitian metrics that will be used subsequently. In Section 3 we define the functional ${\cal J}(h)$ for Higgs bundles over compact K\"ahler manifolds as a natural extension of the Kobayashi functional $J(h)$. We prove that in strictly analogy with the holomorphic vector bundle case, ${\cal J}(h)$  is bounded from below by a nonnegative constant that depends on invariants of the bundle and the base manifold, and that if achieved, its absolute minima are Hermite--Yang--Mills metrics.   We then define a functional ${\cal I}(h)$ for Higgs bundles as an extension of Kobayashi's $I(h)$ functional \cite{Kobayashi}, which is closely related to the Yang--Mills--Higgs functional, and we obtain a formula measuring the difference between ${\cal J}(h)$ and ${\cal I}(h)$. Finally and following the ideas of Kobayashi \cite{Kobayashi}, in Section 4 we study some evolution properties of ${\cal J}(h)$, and we prove that the critical points of ${\cal J}(h)$ are closely tied to a parallelism condition of the Hitchin-Simpson mean curvature.\\

\noindent{\bf Acknowledgements}. Part of this article was done during a stay of the first author at Centro de Investigaci\'on en Matem\'aticas (CIMAT) in Mexico. The authors want to thank CIMAT for the hospitality. The first author was partially supported by the CONACyT grant 256126. The second author was partially supported by the DFG SPP 2026 priority programme ``Geometry at infinity".


\section{Preliminaries}

For convenience, in this section we will fix notation and review some basic definitions for holomorphic and Higgs bundles. Further details can be found in \cite{Bruzzo-Granha, Cardona 1, Kobayashi} and \cite{Simpson}.\\ 

Let $X$ be a compact $n$-dimensional K\"ahler manifold, with K\"ahler metric $g$, K\"ahler form $\omega$, and volume given in terms of the integral ${\rm Vol\,}X = \int_{X}\omega^{n}/n!$. Let $\Omega_{X}^{1,0}$ and $\Omega_{X}^{0,1}$ denote the holomorphic and anti-holomorphic cotangent bundles of $X$ and let $\Omega_{X}^{p,q}$ be the bundle over $X$ obtained by taking ($p$ and $q$ times) wedge products of $\Omega_{X}^{1,0}$ and $\Omega_{X}^{0,1}$. Sections of $\Omega_{X}^{p,q}$ are forms of type $(p,q)$ over $X$, and the space of all these forms is usually denoted by $A_{X}^{p,q}$. A Higgs bundle ${\mathfrak E}$ over $X$ is a pair $(E,\phi)$, where $E$ is a holomorphic vector bundle over $X$ of rank $r \geq 1$ and $\phi : E\rightarrow E\otimes\Omega_{X}^{1,0}$ is a holomorphic $\textrm{End}\, E$-valued $(1,0)$-form---the Higgs field---such that $\phi\wedge\phi : E\rightarrow E\otimes\Omega_{X}^{2,0}$ vanishes.  We define the degree ${\rm deg\,}{\mathfrak E}$ and slope $\mu({\mathfrak E})$ of ${\mathfrak E}$, relative to the K\"ahler form $\omega$, as the degree and slope of the underlying holomorphic vector bundle $E$,
\begin{equation}
{\rm deg\,}{\mathfrak E} = {\rm deg\,}{E} =  \int\limits_{X}c_{1}(E)\wedge\omega^{n-1}\,,\quad\quad  \mu({\mathfrak E})={\rm deg\,}{\mathfrak E}/r\,, \label{def. deg}
\end{equation}
where $c_{1}(E)$ denotes an arbitrary 2-form representing the first Chern class of $E$.
A Higgs subsheaf of ${\mathfrak E}$ is  a $\phi$-invariant subsheaf $F$ of $E$, leading to the Higgs pair ${\mathfrak F}=(F,\phi\lvert_{F})$. Fixing a given slope, the induced algebraic notion of stability for Higgs bundles, commonly called Mumford--Takemoto stability or $\mu$-stability, is defined as follows. We say that a Higgs bundle ${\mathfrak E}$ is \emph{$\mu$-stable} (resp. \emph{$\mu$-semistable}) if for any proper and non-trivial Higgs subsheaf ${\mathfrak F}$ of ${\mathfrak E}$ one has $\mu({\mathfrak F})<\mu({\mathfrak E})$ (resp. $\le$). A Higgs bundle is said to be {\it $\mu$-polystable} if it is a direct sum of a $\mu$-stable Higgs bundles with equal slopes.\\

A Hermitian Higgs bundle is a pair $({\mathfrak E},h)$ where $h$ is a Hermitian metric on the underlying vector bundle $E$. The Hermitian metric $h$ defines a Cartan involution on the fibers of the endomorphism bundle $\mathrm{End}\, E = E\otimes E^{\vee}$ and induces a real splitting into skew-Hermitian and Hermitian eigenbundles 
\begin{equation}
\mathrm{End}\, E = (\mathrm{End}\, E)_{+}\oplus (\mathrm{End}\, E)_{-}. \label{eq. splitting}
\end{equation}
A complex connection $D$ in $E$ is called \emph{unitary} if it preserves the splitting \eqref{eq. splitting}, or equivalently, if it satisfies 
\begin{equation}
dh(s,s')=h(Ds,s') + h(s,Ds') \nonumber
\end{equation} 
for all smooth sections $s,s'$ of $E$. For any $\mathrm{End}\,E$-valued $(p,q)$-form $\psi$, its {\it adjoint} $\psi^{*}_{h}$ is defined as the $\mathrm{End}\,E$-valued $(q,p)$-form satisfying
\[
h(\psi^{*}_{h}s,s') = h(s,\psi s')
\]
for all sections $s,s'$ of $E$. In particular, the adjoint of the Higgs field $\phi$ is an anti-holomorphic $\mathrm{End}\,E$-valued $(0,1)$-form $\phi^{*}_{h}: E\longrightarrow E\otimes\Omega^{0,1}_{X}$, and the condition $\phi\wedge\phi=0$ implies that the identity $\phi^{*}_{h}\wedge\phi^{*}_{h}=0$ is also satisfied. Let $D_{h}=D'_{h} + D''$ be the Chern connection of the pair $(E,h)$, i.e., the unique unitary connection on $E$ satisfying  $D''=d''_{E}$, where $D'_{h}$ and $D''$ are its $(1,0)$ and $(0,1)$ components, respectively.\footnote{The notation is chosen to indicate the explicit dependence of the Chern connection on $h$, which is present in its $(1,0)$ part only.}  
Combining $D_{h}$, $\phi$ and $\phi^{*}_{h}$, Simpson \cite{Simpson} defines the operators ${\cal D}'_{h}=D'_{h} + \phi^{*}_{h}$ and ${\cal D}''=D''+\phi$. Their sum determines yet another connection ${\cal D}_{h}$, the so-called \emph{Hitchin-Simpson connection}
\begin{equation}
{\cal D}_{h} = {\cal D}'_{h} + {\cal D}'' 
= D_{h} + \phi + \phi^{*}_{h}\,. \label{HS-connection}
\end{equation}
The operators ${\cal D}'_{h}$ and ${\cal D}''$ do not define the splitting of ${\cal D}_{h}$ into type, but rather indicate its dependance on $h$. By definition, the {\it Hitchin-Simpson curvature} is the curvature of ${\cal D}_{h}$ and hence ${\cal R}_{h}={\cal D}_{h}\circ {\cal D}_{h}$, and corresponds to an $\mathrm{End}\, E$-valued $2$-form $\Omega$. From 
(\ref{HS-connection}) we get a formula for ${\cal R}_{h}$ in terms of the components of the Chern connection $D_{h} $ and its curvature $R_{h}=D_{h}\circ D_{h}$ as follows
\begin{eqnarray*}
{\cal R}_{h} &=& {\cal R}_{h}^{2,0} + {\cal R}_{h}^{1,1} + {\cal R}_{h}^{0,2}\\
                   &=& (D'_{h}\phi + \phi\wedge\phi) + \left(R_{h} + [\phi,\phi^{*}_{h}]\right) + \left(d''_{E}\phi + D'_{h}\phi^{*}_{h}\right)\\
                   && + \, \left(d''_{E}\phi^{*}_{h} + \phi^{*}_{h}\wedge\phi^{*}_{h}\right) \\
                   &=& D'_{h}\phi + \left(R_{h} + [\phi,\phi^{*}_{h}]\right) + \left(d''_{E}\phi + D'_{h}\phi^{*}_{h}\right) + d''_{E}\phi^{*}_{h}
\end{eqnarray*}
where $D'_{h}$ and $d''_{E}$ denote induced covariant derivatives on a corresponding vector bundle and 
\begin{equation}
[\phi,\phi^{*}_{h}]=\phi\wedge\phi^{*}_{h} + \phi^{*}_{h}\wedge\phi \nonumber
\end{equation} 
is the Lie bracket extension to $\mathrm{End}\, E$-valued 1-forms. Moreover, the subcomponents 
\begin{equation}
{\cal R}^{1,1}_{+} = R_{h} + [\phi,\phi^{*}_{h}]\,, \quad\quad\quad {\cal R}^{1,1}_{-} = d''_{E}\phi + D'_{h}\phi^{*}_{h}\,, \nonumber
\end{equation}
correspond to the bundle splitting \eqref{eq. splitting}. Since $\phi$ is holomorphic and $\phi^{*}_{h}$ is anti-holomorphic, it follows that ${\cal R}^{1,1}_{-}\equiv 0$. Therefore, the (1,1)-part of ${\cal R}_{h}$ is equal to its unitary component, i.e., it is the section of $\Omega^{1,1}({\rm End\,}E)$ given by 
\begin{equation}
{\cal R}^{1,1}_{h} = {\cal R}^{1,1}_{+}  = R_{h} +  [\phi,\phi^{*}_{h}]\,. \label{Formula eff. HS curv.}
\end{equation}
Following \cite{Bruzzo-Granha}, let us consider the operator $L:\Omega^{p,q}_{X}\longrightarrow\Omega^{p+1,q+1}_{X}$ defined by $L\psi=\omega\wedge\psi$ and its $g$-adjoint $\Lambda:\Omega^{p,q}_{X}\longrightarrow\Omega^{p-1,q-1}_{X}$. The {\it Hitchin-Simpson mean curvature} and {\it Hitchin-Simpson scalar curvature} are respectively given by
\begin{equation}
{\cal K}_{h} = \sqrt{-1}\Lambda{\cal R}_{h} = \sqrt{-1}\Lambda{\cal R}^{1,1}_{h}\,,\quad\quad\quad \sigma_{h} = {\rm tr\,}{\cal K}_{h}\,. \label{def. K,sigma}
\end{equation}
Hence, ${\cal K}_{h}$ is an endomorphism of $E$ and $\sigma_{h}$ is just its trace. As it is well known \cite{Kobayashi, Siu}, the curvature of the Chern connection and the Chern mean curvature $K_{h} := \sqrt{-1}\Lambda R_{h}$ are related by 
\begin{equation}
K_{h}\,\omega^{n} = \sqrt{-1}n\,R_{h}\wedge\omega^{n-1}\,. \nonumber
\end{equation}
For Hermitian Higgs bundles we have a similar relation but involving the Hitchin-Simpson curvature and ${\cal K}_{h}$ (see \cite{Cardona 1} for details). To be precise
\begin{equation}
{\cal K}_{h}\,\omega^{n} = \sqrt{-1}n\,{\cal R}_{h}\wedge\omega^{n-1}\,. \label{Formula R and K}
\end{equation}
The formula (\ref{Formula R and K}) could also be regarded as a definition of the Hitchin-Simpson mean curvature.\\

In terms of local holomorphic coordinates in a neighborhood $\mathscr{U}\subset X$ and a local unitary frame $\{e_{i}\}_{i=1}^{r}$ of $E|_{\mathscr{U}}$, with dual frame $\{e^{i}\}_{i=1}^{r}$, we have
\begin{equation}
\omega|_{\mathscr{U}} = \sqrt{-1}\sum_{\alpha,\beta} g_{\alpha\bar{\beta}}dz^{\alpha}\wedge d\bar{z}^{\beta}\,, \quad\quad\quad h|_{\mathscr{U}} = \sum_{i} e^{i}\otimes\bar{e}^{i} \nonumber
\end{equation}
and we can express 
\begin{eqnarray}
{\cal R}_{h}^{1,1}|_{\mathscr{U}} &=& \sum_{i,j,\alpha,\beta}\left({\cal R}_{h}^{1,1}\right)^{i}_{j\alpha\bar\beta}\,e_{i}\otimes e^{j}\,dz^{\alpha} \wedge d\bar{z}^{\beta}\,, \label{Comp. cal R11}\\
{\cal K}_{h}|_{\mathscr{U}} &=& \sum_{i,j} ({\cal K}_{h})^{i}_{j}\,e_{i}\otimes e^{j}\,, \label{Comp. cal K}
\end{eqnarray} 
where
\begin{equation}
({\cal K}_{h})^{i}_{j} = \sum_{\alpha,\beta}g^{\alpha\bar\beta}\left({\cal R}_{h}^{1,1}\right)^{i}_{j\alpha\bar\beta}\,.  \nonumber
\end{equation}
The unitary condition of ${\cal R}^{1,1}_{h}$ can also be concluded from the following computation. From standard literature on complex geometry \cite{Kobayashi, Siu} we know that the curvature of the Chern connection of the pair $(E,h)$ satisfies the relations 
\begin{equation}
\overline{(R_{h})^{i}_{j\alpha\bar\beta}}=(R_{h})^{j}_{\,i\beta\bar\alpha}. \nonumber
\end{equation} 
Now, writting 
\begin{equation}
\phi|_{\mathscr{U}} = \sum_{\alpha}\phi_{\alpha} dz^{\alpha}\,,\quad\quad\quad \phi_{h}^{*}|_{\mathscr{U}}=\sum_{\alpha}\phi_{\bar{\alpha}}^{*} d{\bar z}^{\alpha}\,,\nonumber
\end{equation}
where 
\begin{equation}
\phi_{\alpha}=\sum_{i,j}\phi_{\alpha j}^{i}e_{i}\otimes e^{j}\,,\quad\quad\quad \phi_{\bar{\alpha}}^{*}=\sum_{i,j}{\phi}_{\bar{\alpha} j}^{*i}e_{i}\otimes e^{j}\,, \nonumber
\end{equation}
the identities $h(\phi_{\bar{\alpha}}^{*}e_{k},e_{l})=h(e_{k},\phi_{\alpha}e_{l})$ become
\begin{equation}
\phi^{*l}_{\bar{\alpha} k} = \sum_{i}h\left(\phi^{*i}_{\bar{\alpha}k}e_{i},e_{l}\right) = \sum_{i}h\left(e_{k},\phi^{i}_{\alpha l}e_{i}\right) = \overline{\phi_{\alpha l}^{k}}\,. \label{adj vs. cc}
\end{equation} 
From (\ref{adj vs. cc}) we get
\begin{eqnarray*}
\overline{[\phi_{\alpha},\phi_{\bar\beta}^{*}]^{i}_{j}} & = & \sum_{k}\overline{\left(\phi_{\alpha k}^{i}\phi^{*k}_{\bar\beta j} - \phi^{*i}_{\bar\beta k}\phi^{k}_{\alpha j}\right)}\\
                                                                          & = & \sum_{k}\left(\phi_{\bar\alpha i}^{*k}\phi_{\beta k}^{j}  -  \phi_{\beta i}^{k}\phi^{*j}_{\bar\alpha k} \right)\\
                                                                          & = & [\phi_{\beta},\phi^{*}_{\bar\alpha}]^{j}_{\,i}\,.
\end{eqnarray*}
Therefore, it follows that
\begin{equation}
\overline{\left({\cal R}^{1,1}_{h}\right)^{i}_{j\alpha\bar\beta}} =  \left({\cal R}^{1,1}_{h}\right)^{j}_{i\beta\bar\alpha}\,, \label{eq:Hermitian-curv-comp.}
\end{equation}
and consequently
\begin{equation}
\overline{\left({\cal K}_{h}\right)^{i}_{j}} =  \left({\cal K}_{h}\right)^{j}_{i}\,.
\end{equation}
In other words ${\cal K}$ is a Hermitian endomorphism of $E$, i.e., $h({\cal K}\cdot,\cdot) = h(\cdot,{\cal K}\cdot)$. This fact will be important through out the article. 
A Hermitian metric $h$ in ${\mathfrak E}$ is said to be \emph{Hermite--Einstein} or \emph{Hermite--Yang--Mills} if ${\cal K}_{h}=cI$, where $I$ denotes here the identity endomorphism of $E$ and $c$ is a constant given by \eqref{val. c}. The celebrated  \emph{Hitchin--Kobayashi correspondence} for Higgs bundles \cite{Hitchin, Simpson} relates the algebraic notion of $\mu$-stability with the differential notion of Hermite--Yang--Mills metric.

\begin{thm}[Hitchin--Kobayashi correspondence \cite{Simpson}]\label{HK-corresp.}
Let ${\mathfrak E}$ be a Higgs bundle over a compact K\"ahler manifold $X$. ${\mathfrak E}$ is $\mu$-polystable if and only if there exists a Hermite-Yang-Mills metric $h$ on ${\mathfrak E}$, i.e., if and only if there exists a Hermitian metric $h$ on ${\mathfrak E}$ satisfying ${\cal K}_{h}=cI$. 
\end{thm}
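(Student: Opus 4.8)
The plan is to prove the two implications by the by-now classical route of Hitchin and Simpson, which adapts the arguments of Donaldson and of Uhlenbeck--Yau from holomorphic vector bundles to the Higgs setting, with the Higgs field forcing every object in sight to be compatible with the operator ${\cal D}'' = d''_{E} + \phi$ rather than with $d''_{E}$ alone.

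For the implication ``Hermite--Yang--Mills $\Rightarrow$ $\mu$-polystable'', I would use Chern--Weil theory. Given $h$ with ${\cal K}_{h} = cI$ and an arbitrary saturated Higgs subsheaf $\mathfrak{F} = (F, \phi|_{F})$, one restricts $h$ to $F$ over the complement of the analytic set where $F$ fails to be a subbundle (of codimension $\geq 2$, hence negligible for the degree), lets $\pi$ be the $h$-orthogonal projection onto $F$, and compares the induced Chern connection and Higgs field on $F$ with $\pi\circ{\cal D}_{h}\circ\iota$. The resulting pointwise identity for $\sqrt{-1}\Lambda$ of the first Chern form of $(F,h|_{F})$ contains the second fundamental form $\beta = (I-\pi)\circ{\cal D}''\circ\pi$ quadratically; crucially, the potential $\phi$-contribution to $\beta$ vanishes because $F$ is $\phi$-invariant, i.e.\ $(I-\pi)\phi\pi = 0$, so only the holomorphic second fundamental form $(I-\pi)d''_{E}\pi$ survives. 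Integrating over $X$ and using ${\cal K}_{h}=cI$ gives $\mu(\mathfrak{F}) \leq \mu(\mathfrak{E})$, with equality if and only if $\beta\equiv 0$, i.e.\ if and only if $F$ is a holomorphic subbundle which is an $h$-orthogonal direct summand of $\mathfrak{E}$ as a Higgs bundle. An induction on the rank then decomposes $\mathfrak{E}$ into an $h$-orthogonal direct sum of Hermite--Yang--Mills Higgs subbundles, each of the common slope $\mu(\mathfrak{E})$ and each $\mu$-stable by the same argument applied internally; hence $\mathfrak{E}$ is $\mu$-polystable.

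For the converse, ``$\mu$-polystable $\Rightarrow$ Hermite--Yang--Mills'', I would first reduce to the $\mu$-stable case: a $\mu$-polystable Higgs bundle is a direct sum of $\mu$-stable ones of equal slope, and the orthogonal direct sum of Hermite--Yang--Mills metrics with the common constant $c$ is again Hermite--Yang--Mills. So assume $\mathfrak{E}$ is $\mu$-stable, fix any background metric $h_{0}$, and run the nonlinear parabolic flow (the Hermitian--Einstein flow adapted to Higgs bundles),
\[
h_{t}^{-1}\,\partial_{t} h_{t} = -\bigl({\cal K}_{h_{t}} - cI\bigr),
\]
which, writing $s_{t} = \log(h_{0}^{-1}h_{t})$, is — modulo lower order terms involving $\phi$ and $\phi^{*}_{h_{t}}$ — a heat equation for $s_{t}$, so short-time existence is standard. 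Long-time existence follows from a maximum-principle estimate on $\sup_{X}|{\cal K}_{h_{t}} - cI|$, whose parabolic inequality picks up the bracket $[\phi,\phi^{*}_{h_{t}}]$ with a favorable sign. Moreover the Donaldson-type functional for Higgs bundles, built from the Hitchin--Simpson mean curvature in the manner of Simpson, is monotone non-increasing along the flow, with derivative $-\|{\cal K}_{h_{t}} - cI\|_{L^{2}}^{2}$.

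The decisive step — and the one I expect to be the main obstacle — is to promote long-time existence to convergence of $h_{t}$, up to gauge, to a smooth solution of ${\cal K}_{h} = cI$, using $\mu$-stability. The standard dichotomy is: either $\|s_{t}\|_{C^{0}}$ stays bounded along the flow, in which case monotonicity of the Donaldson functional together with elliptic bootstrapping yields subconvergence to a Hermite--Yang--Mills metric; or $\|s_{t}\|_{C^{0}}\to\infty$, in which case, after normalizing $u_{t} = s_{t}/\|s_{t}\|_{C^{0}}$ and extracting a weak $L^{2}_{1}$-limit $u_{\infty}\neq 0$, an Uhlenbeck--Yau-type argument produces a spectral projection $\pi_{\infty}$ with $(I-\pi_{\infty})\circ d''_{E}\circ\pi_{\infty} = 0$ and $(I-\pi_{\infty})\phi\,\pi_{\infty} = 0$ in $L^{2}$ — both at once, since $\phi$ enters the relevant operator ${\cal D}''$. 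By the Uhlenbeck--Yau regularity theorem the first condition makes $\pi_{\infty}$ a coherent subsheaf $F\subset E$, and the second — the new point relative to the holomorphic case — makes $F$ $\phi$-invariant, so $\mathfrak{F}=(F,\phi|_{F})$ is a Higgs subsheaf with $\mu(\mathfrak{F}) \geq \mu(\mathfrak{E})$, contradicting $\mu$-stability. Ruling out the blow-up alternative requires exactly the properness estimate for the Donaldson functional, of the form $M(h_{0},h_{t}) \geq C_{1}\|s_{t}\|_{L^{1}} - C_{2}$ valid under stability; establishing this estimate, controlling the weak limit, and verifying the $L^{2}_{1}$ and regularity hypotheses while tracking $\phi$ through the limiting procedure is the technical heart of the argument, and is carried out by Simpson following Uhlenbeck--Yau and Donaldson.
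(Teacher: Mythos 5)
The paper does not prove this statement at all: it is quoted as Theorem 1, the Hitchin--Kobayashi correspondence, with the proof delegated entirely to the cited literature (Hitchin for rank $2$ over curves, Simpson in general), so there is no in-paper argument to compare against. Your sketch is a faithful outline of exactly that cited proof: Chern--Weil plus the Gauss--Codazzi/second-fundamental-form argument (with $(I-\pi)\phi\,\pi=0$ by $\phi$-invariance) for the direction ${\cal K}_{h}=cI \Rightarrow \mu$-polystable, and the reduction to the $\mu$-stable case followed by the Donaldson-type heat flow, monotonicity of the Donaldson functional, and the Uhlenbeck--Yau dichotomy producing a destabilizing $\phi$-invariant subsheaf in the blow-up case for the converse. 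As you yourself note, the analytic core --- the properness estimate for the Donaldson functional under stability, the $C^{0}$/maximum-principle bounds, and the regularity of the weak limit yielding a coherent $\phi$-invariant subsheaf --- is only named, not established, so your text is a correct roadmap rather than a self-contained proof; but since the paper itself treats the theorem as an imported result, this level of detail is consistent with how the statement is used there.
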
   

The combination of the Killing form and the Cartan involution induced by $h$ on the fibers of $\mathrm{End}\, E$ defines a Hermitian metric on $\mathrm{End}\, E$, and every smooth endomorphism  $M: E\longrightarrow E$ acquires a pointwise norm by $| M |^{2} = \mathrm{tr}\left(M\circ M^{*}_{h}\right)$ where $M^{*}_{h}$ is the Hermitian conjugation of $M$, i.e. $M^{*}_{h}={\overline M}^{\,\rm t}$. In particular, the pointwise norm of ${\cal K}_{h}$ takes the form $\lvert{\cal K}_{h}\lvert^{2} = {\rm tr}({\cal K}_{h}\circ{\cal K}_{h})$. A Higgs bundle is said to have an \emph{approximate Hermite--Yang--Mills metric} if for any $\epsilon>0$, there exists a Hermitian metric $h_{\epsilon}$ such that ${\rm Max}\lvert {\cal K}_{h_{\epsilon}}-cI\lvert < \epsilon$. Theorem \ref{HK-corresp.} has been already extended some years ago to a result relating the $\mu$-semistability and the existence of approximate Hermite--Yang--Mills metrics. Such a result can be stated as follows.

\begin{thm}[\cite{Bruzzo-Granha, Cardona 1},\cite{Li}]\label{HK-corresp. ap.}
Let ${\mathfrak E}$ and $X$ be as in Theorem \ref{HK-corresp.}. ${\mathfrak E}$ is $\mu$-semistable if and only if there exists an approximate Hermite--Yang--Mills metric on it. 
\end{thm}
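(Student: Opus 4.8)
The plan is to establish the two implications by quite different routes: the ``if'' direction (existence of an approximate Hermite--Yang--Mills metric forces $\mu$-semistability) via a Chern--Weil estimate on Higgs subsheaves, and the ``only if'' direction by reducing, through a Jordan--H\"older filtration, to the $\mu$-polystable case already covered by Theorem~\ref{HK-corresp.}. Throughout, $c$ is the constant of \eqref{val. c}; recall that $\int_{X}{\rm tr}\,{\cal K}_{h}\,\omega^{n}/n!$ is independent of $h$ (after wedging with $\omega^{n-1}$ only the $(1,1)$-part survives, and ${\rm tr}\,[\phi,\phi^{*}_{h}]=0$), that it equals $\deg{\mathfrak E}$ up to a fixed positive factor $A$, and that $c$ is fixed precisely so that $A\,c\,{\rm Vol}\,X=\mu({\mathfrak E})$.

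For the first implication I would argue as follows. Let $h_{\epsilon}$ satisfy ${\rm Max}\,|{\cal K}_{h_{\epsilon}}-cI|<\epsilon$ and let ${\mathfrak F}=(F,\phi|_{F})$ be any proper non-trivial $\phi$-invariant subsheaf; without loss of generality $F$ is saturated, hence a subbundle of $E$ off an analytic subset of codimension $\ge2$, and the $h_{\epsilon}$-orthogonal projection $\pi_{\epsilon}$ onto $F$ extends to a section of $L^{2}_{1}({\rm End}\,E)$ with $\pi_{\epsilon}=(\pi_{\epsilon})^{*}_{h_{\epsilon}}=\pi_{\epsilon}^{2}$, $(I-\pi_{\epsilon})D''\pi_{\epsilon}=0$, and $(I-\pi_{\epsilon})\,\phi\,\pi_{\epsilon}=0$ by $\phi$-invariance. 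I would then invoke the Higgs analogue of Kobayashi's subsheaf identity (cf.\ \cite{Simpson, Bruzzo-Granha, Cardona 1}), in which the invariance conditions let one package the $\bar\partial_{E}$-term and the Higgs term into ${\cal D}''$ and absorb the bracket $[\phi,\phi^{*}_{h_{\epsilon}}]$ hidden inside ${\cal K}_{h_{\epsilon}}$,
\[
\deg{\mathfrak F}\;=\;A\!\int_{X}{\rm tr}\bigl(\pi_{\epsilon}\circ{\cal K}_{h_{\epsilon}}\bigr)\frac{\omega^{n}}{n!}\;-\;A\,\bigl\|{\cal D}''\pi_{\epsilon}\bigr\|_{L^{2}}^{2}\,,
\]
whose specialization to ${\mathfrak F}={\mathfrak E}$ recovers the normalization above. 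Discarding the non-positive last term, writing ${\cal K}_{h_{\epsilon}}=({\cal K}_{h_{\epsilon}}-cI)+cI$, using ${\rm tr}\,\pi_{\epsilon}={\rm rk}\,{\mathfrak F}$ pointwise together with the elementary bound $|{\rm tr}(\pi_{\epsilon}\circ({\cal K}_{h_{\epsilon}}-cI))|\le\sqrt{{\rm rk}\,{\mathfrak F}}\;|{\cal K}_{h_{\epsilon}}-cI|$, and $A\,c\,{\rm Vol}\,X=\mu({\mathfrak E})$, one finds $\deg{\mathfrak F}\le{\rm rk}\,{\mathfrak F}\cdot\mu({\mathfrak E})+A\sqrt{{\rm rk}\,{\mathfrak F}}\,({\rm Vol}\,X)\,\epsilon$, hence (dividing by ${\rm rk}\,{\mathfrak F}\ge1$) $\mu({\mathfrak F})\le\mu({\mathfrak E})+A\,({\rm Vol}\,X)\,\epsilon$. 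Letting $\epsilon\to0$ gives $\mu({\mathfrak F})\le\mu({\mathfrak E})$, so ${\mathfrak E}$ is $\mu$-semistable.

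For the converse I would start from a Jordan--H\"older (Seshadri) filtration $0={\mathfrak E}_{0}\subsetneq\cdots\subsetneq{\mathfrak E}_{\ell}={\mathfrak E}$ by $\phi$-invariant coherent subsheaves with $\mu$-stable quotients ${\mathfrak Q}_{i}$, all of slope $\mu({\mathfrak E})$, so that ${\rm gr}\,{\mathfrak E}=\bigoplus_{i}{\mathfrak Q}_{i}$ is $\mu$-polystable. By Theorem~\ref{HK-corresp.} in the reflexive-Higgs-sheaf form of Biswas--Schumacher \cite{Biswas-Schumacher} (the ${\mathfrak Q}_{i}$ need only be reflexive), ${\rm gr}\,{\mathfrak E}$ carries a Hermite--Yang--Mills metric with the same constant $c$. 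Off a codimension $\ge2$ analytic set $Z$ the filtration splits smoothly and ${\mathfrak E}$ is recovered from ${\rm gr}\,{\mathfrak E}$ by iterated second-fundamental-form data; rescaling these data by $t\to0$ --- equivalently conjugating the smooth splitting metric on $E|_{X\setminus Z}$ by ${\rm diag}(1,t,\dots,t^{\ell-1})$ --- produces metrics $h_{t}$ with ${\cal K}_{h_{t}}=cI+O(t)$ uniformly on $X\setminus Z$, since the off-diagonal parts of both $D''$ and $\phi$ scale linearly in $t$ while the diagonal blocks keep mean curvature $cI$. Smoothing $h_{t}$ across $Z$ then yields honest metrics on ${\mathfrak E}$ with ${\rm Max}\,|{\cal K}_{h_{t}}-cI|\to0$, i.e.\ approximate Hermite--Yang--Mills metrics.

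The hard part is entirely in this last implication, and it is analytic rather than algebraic: making the estimate ${\cal K}_{h_{t}}=cI+O(t)$ and the smoothing across $Z$ rigorous requires the admissible-metric theory near the singular locus of Bando--Siu \cite{Bando-Siu} and Biswas--Schumacher \cite{Biswas-Schumacher}, together with uniform control of the metrics on ${\rm gr}\,{\mathfrak E}$ near $Z$. Running the Donaldson--Yang--Mills flow $h^{-1}\dot{h}=-({\cal K}_{h}-cI)$ shows at least that ${\rm Max}\,|{\cal K}_{h_{t}}-cI|$ is non-increasing in $t$, consistent with the claim, but this does not by itself force the limit to vanish when ${\mathfrak E}$ is strictly semistable --- there the reduction to ${\rm gr}\,{\mathfrak E}$ seems unavoidable, since the weak-limit argument only produces a subsheaf of slope $\mu({\mathfrak E})$, not a genuinely destabilizing one. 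These analytic inputs are precisely what \cite{Bruzzo-Granha, Cardona 1} and \cite{Li} provide, so I would cite them for the technical core; the first implication, in contrast, is elementary once the Chern--Weil identity is available.
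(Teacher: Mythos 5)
This theorem is not proved in the paper at all: it is imported verbatim from \cite{Bruzzo-Granha, Cardona 1} and \cite{Li}, so there is no internal argument to compare yours against. Judged on its own terms, your treatment of the implication ``approximate Hermite--Yang--Mills $\Rightarrow$ $\mu$-semistable'' is essentially the standard Chern--Weil argument of \cite{Bruzzo-Granha, Cardona 1}: the degree of a saturated $\phi$-invariant subsheaf computed through the weakly holomorphic projection $\pi_{\epsilon}$, the sign of $-\|{\cal D}''\pi_{\epsilon}\|^{2}$, the pointwise bound $|\mathrm{tr}(\pi_{\epsilon}({\cal K}_{h_{\epsilon}}-cI))|\le\sqrt{\mathrm{rk}\,{\mathfrak F}}\,|{\cal K}_{h_{\epsilon}}-cI|$, and $\epsilon\to 0$. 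Up to normalization constants this direction is correct.

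The converse, as you present it, has a genuine gap. The Jordan--H\"older quotients ${\mathfrak Q}_{i}$ are in general only reflexive Higgs sheaves; the Hermite--Yang--Mills metrics supplied by \cite{Biswas-Schumacher} are admissible metrics in the sense of \cite{Bando-Siu}, singular along the analytic set $Z$, and the second fundamental forms describing $E$ as an iterated extension of the ${\mathfrak Q}_{i}$ blow up near $Z$. Consequently the asserted estimate ${\cal K}_{h_{t}}=cI+O(t)$ ``uniformly on $X\setminus Z$'' is unjustified (the $O(t^{2})$ Gauss--Codazzi terms on the diagonal blocks are not uniformly bounded), and the final step ``smoothing $h_{t}$ across $Z$'' while retaining sup-norm control of ${\cal K}_{h}$ is exactly the hard analytic content, not a routine patching. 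This scaling-of-extension-data argument is Kobayashi's \cite{Kobayashi}, and it does close when the destabilizing filtration consists of subbundles --- which is why \cite{Cardona 1} can carry it out in dimension one --- but in higher dimensions the general result you are quoting is proved in \cite{Li} by the Donaldson heat flow together with a careful treatment of the Harder--Narasimhan singular set, i.e.\ precisely the flow approach you set aside as insufficient. So your outline establishes one implication and defers the essential difficulty of the other to the references; since the paper itself only cites the theorem this is a reasonable stance, but it should be flagged as a citation of the analytic core rather than a proof of it.
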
       

More generally, the Hermitian metrics on $X$ and $\mathrm{End}\, E$ induce a Hodge operator 
\begin{equation}
*: A^{p,q}(\textrm{End}\, E) \rightarrow A^{n - q,n - p}(\textrm{End}\, E)\, \nonumber
\end{equation} and a pointwise Hermitian inner product $(\cdot,\cdot)$ on the spaces $A^{p,q}(\mathrm{End}\, E)$, $0 \leq p,q \leq n$ (see \cite{Kobayashi} for details). Such operations define an $L^{2}$-Hermitian inner product on the spaces $A^{p,q}(\mathrm{End}\, E)$ in the two equivalent forms
\begin{equation}
\langle\psi,\eta\rangle = \int\limits_{X}\mathrm{tr}\left(\psi\wedge \overline{*}_{h}\,\eta\right) = \int\limits_{X} (\psi,\eta)\,\frac{\omega^{n}}{n!} \label{eq:L2}
\end{equation}
where the operator 
\begin{equation}
\overline{*}_{h}: A^{p,q}(\textrm{End}\, E) \rightarrow A^{n - p,n - q}(\textrm{End}\, E) \nonumber
\end{equation} 
is the composition of Hermitian conjugation and the Hodge operator, i.e., $\overline{*}_{h}\,\psi = *\left(\psi^{*}_{h}\right)$ for all $\psi$ in $A^{p,q}(\textrm{End}\, E)$. The pointwise norm $|\psi|$ and the induced $L^{2}$-norm $\|\psi\|$ are respectively defined by the standard formulas: 
\begin{equation*}
|\psi|^{2}=(\psi,\psi)\,,\quad\quad\quad \|\psi\|^{2}={\langle\psi,\psi\rangle}
\end{equation*}

Let ${\rm Herm}^{+}({\mathfrak E})$ and ${\rm Herm}({\mathfrak E})$ denote, respectively, the set of Hermitian metrics and Hermitian forms on ${\mathfrak E}$.  ${\rm Herm}^{+}({\mathfrak E})$ is an infinite dimensional manifold whose tangent space at any $h$ can be identified with ${\rm Herm}({\mathfrak E})$ (see \cite{Kobayashi} for details). If $h\in{\rm Herm}^{+}({\mathfrak E})$ is fixed, then every $k\in{\rm Herm}({\mathfrak E})$ defines an endomorphism of $E$, usually denoted by $h^{-1}k$, by imposing the condition 
\begin{equation*}
k(s,s')=h(s,h^{-1}ks')
\end{equation*} 
for all sections $s,s'$ of $E$.\\


\section{The functional ${\cal J}(h)$ for Higgs bundles}

Let ${\mathfrak E}$ be a Higgs bundle over a compact K\"ahler manifold $X$, and let $h$ be an arbitrary Hermitian metric on ${\mathfrak E}$, considered as a variable. Following the ideas of Kobayashi \cite{Kobayashi}, we introduce a functional given as the $L^{2}$-norm of the Hitchin--Simpson mean curvature 
\begin{equation}
{\cal J}(h) = \frac{1}{2}\int\limits_{X}\lvert{\cal K}_{h}\lvert^{2}\omega^{n} = \frac{n!}{2}\|\mathcal{K}_{h}\|^{2}\,. \label{Def. J}
\end{equation}
We call the functional (\ref{Def. J}) the {\it Kobayashi functional} of the Higgs bundle ${\mathfrak E}$. Notice that by definition it is always non-negative in ${\rm Herm}^{+}(\mathfrak E)$ and coincides with Kobayashi's functional $J(h)$ in \cite{Kobayashi} if $\phi\equiv 0$. Now, let us consider the constant $c$ determined by 
\begin{equation}
rc\int\limits_{X}\omega^{n} = \int\limits_{X}\sigma_{h}\,\omega^{n}\,. \label{def. c}
\end{equation}
By taking the trace of (\ref{Formula R and K}) and using the definitions (\ref{def. deg}) and (\ref{def. K,sigma}) we get 
\begin{eqnarray*}
\int\limits_{X}\sigma_{h}\,\omega^{n} &=& \int\limits_{X}{\rm tr}\,{\cal K}_{h}\,\omega^{n} = \sqrt{-1}n\int\limits_{X}{\rm tr}\,{\cal R}_{h}\wedge\omega^{n-1}\\
                                    &=& 2\pi n\int\limits_{X}c_{1}(E)\wedge\omega^{n-1} = 2\pi n\,{\rm deg}\,{\mathfrak E}
\end{eqnarray*}
where we have used the formulas 
\begin{equation}
{\rm tr}\,{\cal R}_{h}\wedge\omega^{n-1} = {\rm tr}\,R_{h}\wedge\omega^{n-1}\,,\quad\quad\quad  c_{1}(E)=\frac{\sqrt{-1}}{2\pi}{\rm tr}\,R_{h}\,. \nonumber
\end{equation}
Hence from (\ref{def. c}) we get that
\begin{equation}
c = \frac{2\pi n\,{\rm deg}\,{\mathfrak E}}{r\,n!{\rm Vol}\,X} = \frac{2\pi \mu({\mathfrak E})}{(n-1)!{\rm Vol}\,X} \label{val. c}
\end{equation}
which shows that $c$ is independent of $h$ and, in fact, coincides with the constant defined in the Donaldson functional \cite{Kobayashi, Simpson}. Using the  pointwise norm in the space of Hermitian endomorphisms of $E$, we obtain
\begin{equation}
0 \le \lvert {\cal K}_{h} - cI\lvert^{2} = \lvert {\cal K}_{h}\lvert^{2} + rc^{2} - 2c\,\sigma_{h}\, \label{Key ineq.}
\end{equation}
where $I$ denotes the identity endomorphism in $E$. The inequality (\ref{Key ineq.}) generalizes the inequality in \cite{Kobayashi} involving the Chern mean curvature $K_{h}$
to the Hitchin--Simpson mean curvature ${\cal K}_{h}$. As in the holomorphic vector bundle case, it can be used to find a lower bound condition for ${\cal J}(h)$. In fact, integrating (\ref{Key ineq.}) and using (\ref{def. c}) we get     
\begin{equation}
\int\limits_{X}\lvert {\cal K}_{h}\lvert^{2}\omega^{n} \ge 2c\int\limits_{X}\sigma_{h}\,\omega^{n} - rc^{2}\int\limits_{X}\omega^{n} = rc^{2}\int\limits_{X}\omega^{n}\,.  \label{Key ineq. 2}
\end{equation}
From the inequality (\ref{Key ineq. 2}) and using the definition (\ref{val. c}) we conclude that 
\begin{equation}
{\cal J}(h) \ge \frac{rc^{2}}{2}\int\limits_{X}\omega^{n} = \frac{2n(\pi\,{\rm deg\,}{\mathfrak E})^{2}}{r(n-1)!\,{\rm Vol\,}X}={\cal C}\,. \label{Bounded cond.}
\end{equation}

Notice in particular that the nonnegative lower bound ${\cal C}$ above is different from the constant $c$ previously defined. We conclude from (\ref{Key ineq.}) that the equality in (\ref{Bounded cond.}) follows  if and only if we have that ${\cal K}_{h}=cI$. Therefore, in strict analogy with the classical case of holomorphic vector bundles we conclude the following result.

\begin{thm}\label{Bounded Thm}
Let ${\mathfrak E}$ be a Higgs bundle over a compact K\"ahler manifold $X$. The functional ${\cal J}(h)$ is bounded below as in (\ref{Bounded cond.}) by a constant ${\cal C}$ which depends on invariants of the bundle and the cohomology class of the K\"ahler form in $X$. Moreover, ${\cal J}(h)$ attains this lower bound at $h=h_{0}$ if and only if $h_{0}$ is a  Hermite--Yang--Mills metric of 
${\mathfrak E}$.
\end{thm}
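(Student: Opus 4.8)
The plan is to verify the two assertions of the theorem directly from the inequalities already derived in the text, so the "proof" is essentially an assembly of the estimates (\ref{Key ineq.})--(\ref{Bounded cond.}) together with the positivity of the pointwise norm on Hermitian endomorphisms. First I would recall that ${\cal K}_h$ is a Hermitian endomorphism of $E$ (established via (\ref{eq:Hermitian-curv-comp.})), so that $|{\cal K}_h - cI|^2 \geq 0$ pointwise with equality at a point $x$ if and only if $({\cal K}_h)_x = cI$; this is the only structural input beyond the displayed computations. Integrating the pointwise identity (\ref{Key ineq.}) over $X$ and substituting the defining relation (\ref{def. c}) for $c$ yields (\ref{Key ineq. 2}), and dividing by $2$ and inserting the closed-form value (\ref{val. c}) of $c$ gives the bound ${\cal J}(h)\geq {\cal C}$ of (\ref{Bounded cond.}); since ${\cal C}$ is expressed purely in terms of ${\rm deg}\,{\mathfrak E}$, $r$, $n$, and ${\rm Vol}\,X$ (and the latter depends only on the cohomology class $[\omega]$), the first claim follows.

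For the equality statement I would argue both directions. If $h_0$ is Hermite--Yang--Mills, then ${\cal K}_{h_0} = cI$ by definition, hence $\sigma_{h_0} = {\rm tr}({\cal K}_{h_0}) = rc$ and $|{\cal K}_{h_0}|^2 = rc^2$ pointwise, so ${\cal J}(h_0) = \tfrac{1}{2}\int_X rc^2\,\omega^n = \tfrac{rc^2}{2}{\rm Vol}\,X\cdot n! /? $—more carefully, ${\cal J}(h_0) = \tfrac{1}{2}\int_X |{\cal K}_{h_0}|^2\omega^n = \tfrac{rc^2}{2}\int_X \omega^n = {\cal C}$, attaining the bound. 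Conversely, suppose ${\cal J}(h_0) = {\cal C}$. Then equality holds in the integrated inequality (\ref{Key ineq. 2}), which (after using (\ref{def. c}) to rewrite the right-hand side) forces $\int_X |{\cal K}_{h_0} - cI|^2\,\omega^n = 0$. Since the integrand is a nonnegative continuous function and $\omega^n$ is a volume form, it must vanish identically, so $|{\cal K}_{h_0} - cI|^2 \equiv 0$ on $X$. By the nondegeneracy of the pointwise norm on Hermitian endomorphisms this gives ${\cal K}_{h_0} = cI$ everywhere, i.e.\ $h_0$ is Hermite--Yang--Mills. This completes the proof.

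I do not anticipate a genuine obstacle here: every ingredient is already on the page, and the argument is the standard "$L^2$-norm bounded below by the norm of the harmonic/constant representative, with equality iff the defect vanishes" pattern familiar from Kobayashi's treatment of $J(h)$. The only point requiring a word of care is the implication "integral of a nonnegative continuous function is zero $\Rightarrow$ the function is identically zero," which uses compactness of $X$ and continuity of ${\cal K}_h$ in $h$ fixed; this is routine. One might also remark, for completeness, that ${\cal C} \geq 0$ with ${\cal C} = 0$ precisely when ${\rm deg}\,{\mathfrak E} = 0$, in which case the Hermite--Yang--Mills condition reads ${\cal K}_{h_0} = 0$; but this is a side observation rather than part of the proof. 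I would then close by noting that, combined with Theorem~\ref{HK-corresp.}, the infimum ${\cal C}$ is attained if and only if ${\mathfrak E}$ is $\mu$-polystable, which is the consequence advertised in the introduction.
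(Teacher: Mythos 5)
Your proof is correct and follows essentially the same route as the paper: the pointwise inequality (\ref{Key ineq.}), integration against $\omega^{n}$ using (\ref{def. c}) to obtain (\ref{Key ineq. 2}) and hence (\ref{Bounded cond.}), and the observation that equality forces $\mathcal{K}_{h_0}=cI$ pointwise (your momentary hesitation about $n!$ versus $\int_X\omega^n$ is resolved correctly, since $\mathcal{C}=\tfrac{rc^2}{2}\int_X\omega^n$). The only difference is that you spell out the "vanishing integral of a nonnegative continuous function" step, which the paper leaves implicit.
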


Notice that even though ${\cal J}(h)$ and $J(h)$ are different functionals, the lower bound ${\cal C}$ is the same as the lower bound associated to $J(h)$. Now, from Theorem \ref{Bounded Thm} and inequality (\ref{Key ineq.}) it is clear that if we have an approximate Hermite--Yang--Mills metric on ${\mathfrak E}$, the constant ${\cal C}$ becomes an infimum for the functional ${\cal J}(h)$. Putting together Theorem \ref{Bounded Thm} and Theorems \ref{HK-corresp.} and \ref{HK-corresp. ap.} we conclude that the existence of a minimum or infimum for the Kobayashi functional depends essentially on algebraic conditions of ${\mathfrak E}$. More precisely, we have the following results.

\begin{cor}
Let ${\mathfrak E}$ and $X$ be as in Theorem \ref{Bounded Thm}. Then, ${\cal J}(h)$ attains the lower bound ${\cal C}$ given by (\ref{Bounded cond.}) if and only if ${\mathfrak E}$ is $\mu$-polystable.  
\end{cor}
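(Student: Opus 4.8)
The plan is to deduce the statement directly by chaining Theorem \ref{Bounded Thm} with the Hitchin--Kobayashi correspondence (Theorem \ref{HK-corresp.}), so that no new analysis is required. First I would prove the forward implication: assume that ${\cal J}(h)$ attains ${\cal C}$, say ${\cal J}(h_{0})={\cal C}$ for some $h_{0}\in{\rm Herm}^{+}({\mathfrak E})$. By the equality discussion following \eqref{Bounded cond.}---equivalently, by the second assertion of Theorem \ref{Bounded Thm}---this forces ${\cal K}_{h_{0}}=cI$, i.e.\ $h_{0}$ is a Hermite--Yang--Mills metric on ${\mathfrak E}$. Theorem \ref{HK-corresp.} then yields at once that ${\mathfrak E}$ is $\mu$-polystable.

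For the converse, suppose ${\mathfrak E}$ is $\mu$-polystable. By Theorem \ref{HK-corresp.} there exists a Hermitian metric $h_{0}$ on ${\mathfrak E}$ with ${\cal K}_{h_{0}}=cI$. Substituting this into the definition \eqref{Def. J} and using \eqref{val. c}---or, more economically, invoking the equality case of Theorem \ref{Bounded Thm}---gives ${\cal J}(h_{0})=\tfrac{rc^{2}}{2}\int_{X}\omega^{n}={\cal C}$, so ${\cal J}(h)$ indeed attains its lower bound. Combining the two implications proves the corollary.

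I do not expect a genuine obstacle here: the statement is a formal consequence of the two cited theorems. The only point that requires a little care is the bookkeeping of what ``attains the lower bound'' means, namely the existence of a metric realising ${\cal C}$, which is precisely the ``if achieved'' hypothesis appearing in Theorem \ref{Bounded Thm}; one must make sure not to conflate attainment of the infimum with the weaker statement that ${\cal C}$ is an infimum. Indeed, if desired one could append a remark, using Theorem \ref{HK-corresp. ap.}, that when ${\mathfrak E}$ is only $\mu$-semistable the constant ${\cal C}$ is still the infimum of ${\cal J}$---approached along an approximate Hermite--Yang--Mills sequence---even though it need not be attained, which sharpens the dichotomy recorded in the corollary.
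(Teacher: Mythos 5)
Your proposal is correct and follows exactly the route the paper intends: the corollary is obtained by combining the equality case of Theorem \ref{Bounded Thm} with the Hitchin--Kobayashi correspondence (Theorem \ref{HK-corresp.}), in both directions. Your closing remark on the semistable case via Theorem \ref{HK-corresp. ap.} likewise matches the paper's subsequent corollary.
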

\begin{cor}
Let ${\mathfrak E}$ and $X$ be as in Theorem \ref{Bounded Thm}. Then, the constant ${\cal C}$ given by (\ref{Bounded cond.}) is an infimum of ${\cal J}(h)$ if and only if ${\mathfrak E}$ is $\mu$-semistable.  
\end{cor}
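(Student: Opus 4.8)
The plan is to reformulate ``${\cal C}$ is an infimum of ${\cal J}$'' as a statement about the $L^{2}$-distance from ${\cal K}_{h}$ to $cI$, deduce one implication from Theorem~\ref{HK-corresp. ap.}, and the other from a Chern--Weil estimate for $\phi$-invariant subsheaves. First I would record the identity obtained by integrating (\ref{Key ineq.}) against $\omega^{n}$ and substituting (\ref{def. c}):
\begin{equation*}
\int\limits_{X}\lvert{\cal K}_{h}-cI\rvert^{2}\,\omega^{n}=\int\limits_{X}\lvert{\cal K}_{h}\rvert^{2}\,\omega^{n}-rc^{2}\int\limits_{X}\omega^{n}=2\bigl({\cal J}(h)-{\cal C}\bigr).
\end{equation*}
Thus ${\cal J}(h)={\cal C}+\tfrac{1}{2}\int_{X}\lvert{\cal K}_{h}-cI\rvert^{2}\omega^{n}$ for every $h$, and therefore ${\cal C}$ is an infimum of ${\cal J}$ if and only if there exists a sequence $h_{i}\in{\rm Herm}^{+}({\mathfrak E})$ with ${\cal K}_{h_{i}}\to cI$ in $L^{2}$.

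\emph{($\mu$-semistable $\Rightarrow$ ${\cal C}$ an infimum).} If ${\mathfrak E}$ is $\mu$-semistable, Theorem~\ref{HK-corresp. ap.} provides, for every $\epsilon>0$, a metric $h_{\epsilon}$ with ${\rm Max}\,\lvert{\cal K}_{h_{\epsilon}}-cI\rvert<\epsilon$. Since $X$ is compact this is an $L^{2}$-bound, $\int_{X}\lvert{\cal K}_{h_{\epsilon}}-cI\rvert^{2}\omega^{n}\le\epsilon^{2}\int_{X}\omega^{n}$, so by the identity above and Theorem~\ref{Bounded Thm} one has ${\cal C}\le{\cal J}(h_{\epsilon})\le{\cal C}+\tfrac{1}{2}\epsilon^{2}\int_{X}\omega^{n}$; letting $\epsilon\to0$ gives $\inf_{h}{\cal J}(h)={\cal C}$. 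This is essentially the observation already recorded in the text before the statement.

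\emph{(${\cal C}$ an infimum $\Rightarrow$ $\mu$-semistable).} I would argue the contrapositive. Suppose ${\mathfrak E}$ is not $\mu$-semistable and fix a proper nontrivial $\phi$-invariant saturated subsheaf $F\subset E$ with $\mu({\mathfrak F})>\mu({\mathfrak E})$, where ${\mathfrak F}=(F,\phi|_{F})$ and $p={\rm rank}\,F$. For each $h$, let $\pi=\pi_{F}$ denote the ($L^{2}_{1}$) $h$-orthogonal projection onto $F$, which satisfies $\pi=\pi^{*}_{h}=\pi^{2}$ a.e. The Chern--Weil formula for $\phi$-invariant subsheaves (see \cite{Simpson, Bruzzo-Granha, Cardona 1}) writes ${\rm deg}\,{\mathfrak F}$ as $\tfrac{1}{2\pi n}\int_{X}{\rm tr}(\pi\,{\cal K}_{h})\,\omega^{n}$ minus a sum of $L^{2}$-norms of $\bar\partial\pi$ and of the Higgs second fundamental form of $\pi$; in particular it yields the inequality
\begin{equation*}
{\rm deg}\,{\mathfrak F}\le\frac{1}{2\pi n}\int\limits_{X}{\rm tr}\bigl(\pi\,{\cal K}_{h}\bigr)\,\omega^{n},
\end{equation*}
the analogue of the identity ${\rm deg}\,{\mathfrak E}=\tfrac{1}{2\pi n}\int_{X}\sigma_{h}\,\omega^{n}$ established above. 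Decomposing ${\rm tr}(\pi\,{\cal K}_{h})=pc+{\rm tr}\bigl(\pi({\cal K}_{h}-cI)\bigr)$, evaluating the first contribution as $p\,\mu({\mathfrak E})$ via (\ref{val. c}), bounding $\lvert{\rm tr}(\pi M)\rvert\le\lvert\pi\rvert\,\lvert M\rvert=\sqrt{p}\,\lvert M\rvert$ pointwise, and applying the Cauchy--Schwarz inequality, one arrives at
\begin{equation*}
0<\mu({\mathfrak F})-\mu({\mathfrak E})\le\frac{\bigl(n!\,{\rm Vol}\,X\bigr)^{1/2}}{2\pi n\sqrt{p}}\Bigl(\int\limits_{X}\lvert{\cal K}_{h}-cI\rvert^{2}\,\omega^{n}\Bigr)^{1/2}.
\end{equation*}
Hence $\int_{X}\lvert{\cal K}_{h}-cI\rvert^{2}\omega^{n}$ is bounded below by a constant $2\delta>0$ \emph{independent of $h$}, so by the first paragraph ${\cal J}(h)\ge{\cal C}+\delta$ for all $h$, and ${\cal C}$ is not an infimum. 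Combined with the previous implication this proves the corollary. (Equivalently and directly: if ${\cal C}$ were an infimum, a minimizing sequence would satisfy ${\cal K}_{h_{i}}\to cI$ in $L^{2}$, and the last displayed estimate, applied to each $h_{i}$ and passed to the limit, would force $\mu({\mathfrak F})\le\mu({\mathfrak E})$ for every Higgs subsheaf, i.e.\ $\mu$-semistability.)

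The step I expect to be the main obstacle is the Chern--Weil inequality for $\phi$-invariant subsheaves: one must know that, relative to the Hitchin--Simpson mean curvature, the correction terms coming both from $\bar\partial\pi_{F}$ and from the Higgs field enter with a nonnegative sign, so that ${\rm deg}\,{\mathfrak F}\le\tfrac{1}{2\pi n}\int_{X}{\rm tr}(\pi_{F}\,{\cal K}_{h})\,\omega^{n}$ holds for \emph{every} $h$. It is also worth noting the asymmetry between the two directions: Theorem~\ref{HK-corresp. ap.} supplies $L^{\infty}$ (hence $L^{2}$) control of ${\cal K}_{h}-cI$, which is exactly what the first implication needs, whereas ${\cal J}(h)$ only records $L^{2}$ data, so the converse genuinely requires the Chern--Weil estimate in order to translate $L^{2}$-smallness back into a bound on slopes.
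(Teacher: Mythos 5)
Your proposal is correct, and its first half is exactly the paper's argument: the identity ${\cal J}(h)={\cal C}+\tfrac{1}{2}\int_{X}\lvert{\cal K}_{h}-cI\rvert^{2}\omega^{n}$ together with Theorem \ref{HK-corresp. ap.} gives ``$\mu$-semistable $\Rightarrow$ ${\cal C}$ is an infimum'', which is precisely the observation recorded in the text preceding the corollaries. Where you genuinely diverge is the converse. The paper disposes of it by simply combining Theorem \ref{Bounded Thm} with Theorem \ref{HK-corresp. ap.}, i.e.\ it implicitly identifies ``${\cal J}(h)$ can be made arbitrarily close to ${\cal C}$'' with ``approximate Hermite--Yang--Mills metrics exist''; as you correctly point out, that identification is only immediate in one direction, since Theorem \ref{HK-corresp. ap.} is stated in terms of $L^{\infty}$ control of ${\cal K}_{h}-cI$ while ${\cal J}(h)$ only records $L^{2}$ data. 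Your contrapositive argument bridges exactly this gap: the Chern--Weil inequality ${\rm deg}\,{\mathfrak F}\le\tfrac{1}{2\pi n}\int_{X}{\rm tr}(\pi\,{\cal K}_{h})\,\omega^{n}$ for $\phi$-invariant saturated subsheaves (valid because both the $\bar\partial\pi$ term and the Higgs second fundamental form enter with a nonnegative sign; see \cite{Simpson, Bruzzo-Granha, Cardona 1}), combined with $\lvert\pi\rvert=\sqrt{p}$ and Cauchy--Schwarz, shows that a destabilizing Higgs subsheaf forces a uniform gap ${\cal J}(h)\ge{\cal C}+\delta$ with $\delta>0$ independent of $h$, and your constants check out. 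So your route buys a self-contained quantitative proof of the ``only if'' direction that does not need to upgrade $L^{2}$-smallness to an approximate Hermite--Yang--Mills metric, at the price of importing the Higgs Chern--Weil estimate from the literature, whereas the paper's route is shorter but leans entirely on Theorem \ref{HK-corresp. ap.} and leaves the $L^{2}$-versus-$L^{\infty}$ point implicit.
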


The above corollaries immediately link boundedness properties of ${\cal J}(h)$ with stability properties of the Higgs bundle, e.g., since $\mu$-stability is an special case of $\mu$-polystability or any tensor product of $\mu$-polystable Higgs bundles is $\mu$-polystable, then it is clear that the functional ${\cal J}(h)$ associated to any $\mu$-stable Higgs bundle or any tensor product of $\mu$-polystable Higgs bundles necessarily attains its lower bound. Also, if ${\mathfrak E}$ is $\mu$-semistable but not $\mu$-polystable, then ${\cal J}(h)$ can be arbitrarily close to ${\cal C}$ (but it never reaches such a value). We won't address such algebraic aspects in this article, and instead refer the reader to \cite{Biswas-Schumacher, Bruzzo-Granha, Cardona 1, Cardona 2} or \cite{Simpson}.\\

On a holomorphic vector bundle $E$, Kobayashi \cite{Kobayashi} defines the functional $I(h)$ as the $L^{2}$-norm of the curvature of the induced Chern connection of $h$, and shows that the difference $I(h) - J(h)$ is a constant term involving the K\"ahler form of $X$ and the first and second Chern classes of $E$. The present situation is slightly more complicated, given that the curvature form of the Hitchin--Simpson connection possesses extra terms of type (2,0) and (0,2) which should be considered when defining its $L^{2}$-norm. However, its  $(1,1)$-part  also specializes to $R_{h}$ when $\phi\equiv 0$, and moreover, it follows from \eqref{eq:Hermitian-curv-comp.} that ${\cal R}^{1,1}_{h}$ still takes values in $(\mathrm{End}\,E)_{+}$. Hence, the pointwise inner product for ${\cal R}^{1,1}_{h}$ is explicitly defined with respect to any unitary frame in $E$ and local holomorphic coordinates on a neighborhood 
$\mathscr{U}\subset X$ by the formula
\begin{equation}
\left.\left|{\cal R}^{1,1}_{h}\right|^{2}\right|_{\mathscr{U}} = \sum_{\alpha,\beta,\gamma,\delta}\sum_{i,j} g^{\alpha\bar\beta}g^{\gamma\bar\delta}({\cal R}_{h})^{i}_{\,j\alpha\bar\delta}({\cal R}_{h})^{j}_{\,i\gamma\bar\beta}\,, \label{R11 components}
\end{equation}
where $({\cal R}_{h})^{i}_{\,j\alpha\bar\delta}$ in an abbreviation for the components of ${\cal R}_{h}^{1,1}$ defined in (\ref{Comp. cal R11}). In terms of the $L^{2}$-inner product \eqref{eq:L2}, we construct the functional ${\cal I}(h)$ in ${\rm Herm}^{+}(\mathfrak E)$ as the $L^{2}$-energy
\begin{equation}
{\cal I}(h) = \frac{1}{2}\int\limits_{X}\left|{\cal R}^{1,1}_{h}\right|^{2}\omega^{n}\,. \label{Def. I}
\end{equation} 
\begin{remark}
In analogy to the holomorphic vector bundle case, the functional ${\cal I}(h)$ coincides with the Yang-Mills-Higgs functional with a choice of holomorphic structure and holomorphic Higgs field\footnote{Over a $C^{\infty}$ Hermitian vector bundle there is a one-to-one correspondence between unitary connections on $E$  with curvature of type $(1,1)$ and $d''_{E}$-operators (see \cite{Lubke} for details). In the Higgs bundle setting, by fixing  $d''_{E}$ and a holomorphic Higgs field $\phi$, the remaining degrees of freedom of the Yang--Mills-Higgs functional are parametrized by Hermitian metrics $h$. The resulting functional is what we consider here.} 
\cite{Bradlow-Wilkin, Wentworth} only in the special (but mostly relevant) case when $n = 1$, since the latter is instead defined as the $L^{2}$-energy of the full Hitchin--Simpson curvature form. Moreover, in the case when $n = 1$ it also follows from \eqref{Formula R and K} that 
\begin{equation}
\sqrt{-1}\mathcal{R}_{h} = \sqrt{-1}\mathcal{R}^{1,1}_{h} = \mathcal{K}_{h}\omega\,, \nonumber
\end{equation}
hence $|\mathcal{R}^{1,1}_{h}|^{2} = |\mathcal{K}_{h} |^{2}$ and consequently we also conclude that ${\cal I}(h) = {\cal J}(h)$. Therefore, from now on we will assume that $n \geq 2$.
\end{remark}

In order to estimate the difference ${\cal I}(h) - {\cal J}(h)$ we will require the following auxiliary result.

\begin{lem}\label{lemma:tr-curv-sqr} Let $(\mathfrak{E},h)$ be a Hermitian Higgs bundle over a K\"ahler manifold $(X,\omega)$ of dimension $n \geq 2$. Then, the $(1,1)$-part of the curvature form of its  Hitchin--Simpson connection satisfies
\begin{equation}
{\rm tr}\left({\cal R}^{1,1}_{h}\wedge{\cal R}^{1,1}_{h}\right)\wedge\omega^{n-2} = \frac{1}{n(n-1)}\left(\lvert{\cal R}^{1,1}_{h}\lvert^{2} -  \lvert {\cal K}_{h}\lvert^{2}\right)\omega^{n}\,.
\end{equation}
\end{lem}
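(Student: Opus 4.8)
The identity is pointwise (both sides are smooth $2n$-forms on $X$), so it suffices to verify it at an arbitrary point $p\in X$, where we may work in a convenient frame. The plan is to fix holomorphic coordinates $z^{1},\dots,z^{n}$ centered at $p$ with $g_{\alpha\bar\beta}(p)=\delta_{\alpha\beta}$, together with a local $h$-unitary frame $\{e_{i}\}_{i=1}^{r}$ of $E$; at $p$ one then has $\omega=\sqrt{-1}\sum_{\mu}dz^{\mu}\wedge d\bar z^{\mu}$, $g^{\alpha\bar\beta}=\delta_{\alpha\beta}$, and, in the notation of \eqref{Comp. cal R11}, $({\cal K}_{h})^{i}_{j}=\sum_{\alpha}({\cal R}_{h})^{i}_{j\alpha\bar\alpha}$. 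In this frame $|{\cal K}_{h}|^{2}=\sum_{i,j}({\cal K}_{h})^{i}_{j}({\cal K}_{h})^{j}_{i}=\sum_{i,j}\sum_{\alpha,\gamma}({\cal R}_{h})^{i}_{j\alpha\bar\alpha}({\cal R}_{h})^{j}_{i\gamma\bar\gamma}$ (using that ${\cal K}_{h}$ is a Hermitian endomorphism), $|{\cal R}^{1,1}_{h}|^{2}=\sum_{i,j}\sum_{\alpha,\gamma}({\cal R}_{h})^{i}_{j\alpha\bar\gamma}({\cal R}_{h})^{j}_{i\gamma\bar\alpha}$ by \eqref{R11 components}, and
\begin{equation*}
{\rm tr}\left({\cal R}^{1,1}_{h}\wedge{\cal R}^{1,1}_{h}\right)=\sum_{i,j}\sum_{\alpha,\beta,\gamma,\delta}({\cal R}_{h})^{i}_{j\alpha\bar\beta}({\cal R}_{h})^{j}_{i\gamma\bar\delta}\,dz^{\alpha}\wedge d\bar z^{\beta}\wedge dz^{\gamma}\wedge d\bar z^{\delta}.
\end{equation*}
So the whole statement reduces to expressing $dz^{\alpha}\wedge d\bar z^{\beta}\wedge dz^{\gamma}\wedge d\bar z^{\delta}\wedge\omega^{n-2}$ as a multiple of $\omega^{n}$.

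The combinatorial core is the following elementary identity at $p$: writing $\theta_{\mu}:=\sqrt{-1}\,dz^{\mu}\wedge d\bar z^{\mu}$, so that $\omega=\sum_{\mu}\theta_{\mu}$, $\omega^{n}=n!\,\theta_{1}\cdots\theta_{n}$, and $\omega^{n-2}=(n-2)!\sum_{|S|=n-2}\prod_{\mu\in S}\theta_{\mu}$ (the sum over $(n-2)$-element subsets of $\{1,\dots,n\}$), the form $dz^{\alpha}\wedge d\bar z^{\beta}\wedge dz^{\gamma}\wedge d\bar z^{\delta}\wedge\omega^{n-2}$ vanishes unless $\alpha\neq\gamma$ and $\{\beta,\delta\}=\{\alpha,\gamma\}$; in the two surviving configurations it equals $-\,\omega^{n}/(n(n-1))$ when $(\beta,\delta)=(\alpha,\gamma)$ and $+\,\omega^{n}/(n(n-1))$ when $(\beta,\delta)=(\gamma,\alpha)$. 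This is a short computation in the exterior algebra (each $\theta_{\mu}$ is a $2$-form, hence commutes with everything), the only delicate point being the sign picked up when one reorders $dz^{\alpha}\wedge d\bar z^{\gamma}\wedge dz^{\gamma}\wedge d\bar z^{\alpha}$ into $+\theta_{\alpha}\theta_{\gamma}$ versus $dz^{\alpha}\wedge d\bar z^{\alpha}\wedge dz^{\gamma}\wedge d\bar z^{\gamma}=-\theta_{\alpha}\theta_{\gamma}$; this opposite sign is exactly what produces the minus sign in $|{\cal R}^{1,1}_{h}|^{2}-|{\cal K}_{h}|^{2}$.

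Substituting the key identity into the expansion above gives
\begin{equation*}
{\rm tr}\left({\cal R}^{1,1}_{h}\wedge{\cal R}^{1,1}_{h}\right)\wedge\omega^{n-2}=\frac{1}{n(n-1)}\sum_{i,j}\sum_{\alpha\neq\gamma}\left[({\cal R}_{h})^{i}_{j\alpha\bar\gamma}({\cal R}_{h})^{j}_{i\gamma\bar\alpha}-({\cal R}_{h})^{i}_{j\alpha\bar\alpha}({\cal R}_{h})^{j}_{i\gamma\bar\gamma}\right]\omega^{n}.
\end{equation*}
Finally I would relax the restriction $\alpha\neq\gamma$ to a sum over all $\alpha,\gamma$: the diagonal term $\alpha=\gamma$ contributes $\sum_{i,j,\alpha}({\cal R}_{h})^{i}_{j\alpha\bar\alpha}({\cal R}_{h})^{j}_{i\alpha\bar\alpha}$ to \emph{both} products inside the bracket, so it cancels. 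The two resulting unrestricted sums are precisely $|{\cal R}^{1,1}_{h}|^{2}$ and $\sum_{i,j}({\cal K}_{h})^{i}_{j}({\cal K}_{h})^{j}_{i}=|{\cal K}_{h}|^{2}$, identified at the start; this yields $\frac{1}{n(n-1)}\big(|{\cal R}^{1,1}_{h}|^{2}-|{\cal K}_{h}|^{2}\big)\omega^{n}$, as claimed. The argument is entirely pointwise linear algebra with no analytic input; the only real obstacle is the bookkeeping — getting the exterior-algebra signs right in the key identity and confirming that the diagonal contribution is literally the same in the two products — and the hypothesis $n\geq2$ is precisely what makes $\omega^{n-2}$, and hence this whole computation, meaningful.
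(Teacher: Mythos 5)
Your proof is correct and follows essentially the same route as the paper: expand ${\rm tr}\left({\cal R}^{1,1}_{h}\wedge{\cal R}^{1,1}_{h}\right)\wedge\omega^{n-2}$ in a pointwise unitary (co)frame and apply exactly the combinatorial identity \eqref{Prop. u. frames} (your signs and the factor $\frac{1}{n(n-1)}$ agree with it), then identify the two sums with $\lvert{\cal R}^{1,1}_{h}\lvert^{2}$ and $\lvert{\cal K}_{h}\lvert^{2}$. The only differences are cosmetic: you use normal holomorphic coordinates at a point instead of a local unitary coframe, and you spell out the cancellation of the diagonal terms $\alpha=\gamma$ when passing to unrestricted sums, a step the paper uses implicitly.
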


\begin{proof}
As in \cite{Kobayashi, Siu}, let $\mathscr{U}\subset X$ be an local neighborhood with a local unitary coframe $\{\theta^{\alpha}\}_{\alpha=1}^{n}$, and $\{e_{i}\}_{i=1}^{r}$ be a local unitary frame for $E|_{\mathscr{U}}$ with dual frame $\{e^{i}\}_{i=1}^{r}$, so that  
\begin{eqnarray*}
g|_{\mathscr{U}} = \sum_{\alpha}\theta^{\alpha}\otimes\bar{\theta}^{\alpha}\,,\quad\quad \omega|_{\mathscr{U}} = \sqrt{-1}\sum_{\alpha}\theta^{\alpha}\wedge\theta^{\bar\alpha}\,,\quad\quad h |_{\mathscr{U}} = \sum_{i}e^{i}\otimes\bar{e}^{i}\,.
\end{eqnarray*}
The frame $\{\theta^{\alpha}\}_{\alpha=1}^{n}$ satisfies the condition: 
\begin{equation}\label{Prop. u. frames}
n(n-1)\,\theta^{\alpha}\wedge\bar\theta^{\beta}\wedge\theta^{\gamma}\wedge\bar\theta^{\delta}\wedge\omega^{n-2} =
\left\{
\begin{array}{rl}
 -\omega^{n} & \quad\text{if} \quad\alpha = \beta \neq \gamma = \delta\,,\\\\
 \omega^{n} & \quad\text{if} \quad\alpha = \delta \neq \beta = \gamma\,,\\\\
0\;\; & \quad\text{otherwise.}
\end{array}\right.
\end{equation}
The curvature component ${\cal R}^{1,1}_{h}$ can be written in terms of these frames as
\begin{equation}
{\cal R}^{1,1}_{h} = \sum_{\alpha,\beta}({\cal R}_{h})_{\alpha\bar\beta}\,\theta^{\alpha}\wedge\bar\theta^{\beta}\,, 
\qquad ({\cal R}_{h})_{\alpha\bar\beta} = \sum_{i,j}({\cal R}_{h})^{i}_{j\alpha\bar\beta}\,e_{i}\otimes e^{j}\,,
\end{equation}
and using (\ref{Comp. cal K}) we obtain 
\begin{equation}
({\cal K}_{h})^{i}_{j} = \sum_{\alpha,\beta}g^{\alpha\bar\beta}({\cal R}_{h})^{i}_{j\alpha\bar\beta} = \sum_{\alpha}({\cal R}_{h})^{i}_{j\alpha\bar\alpha}  \nonumber
\end{equation}
where the second equality in the above expression follows from the condition $g^{\alpha\bar\beta}=\delta^{\alpha\bar\beta}$. Hence, we have that
\begin{equation}
\lvert {\cal K}_{h}\lvert^{2} = {\rm tr}({\cal K}_{h}\circ{\cal K}_{h}) = \sum_{i,j}({\cal K}_{h})^{i}_{j}({\cal K}_{h})^{j}_{i} =  \sum_{i,j,\alpha,\beta}({\cal R}_{h})^{i}_{j\alpha\bar\alpha}({\cal R}_{h})^{j}_{i\beta\bar\beta}\,. \label{Norm K, comp.}
\end{equation}
Using the property (\ref{Prop. u. frames}) and definitions (\ref{Norm K, comp.}) and (\ref{R11 components}) we get the identity
\begin{eqnarray*}
{\rm tr}\left({\cal R}^{1,1}_{h}\wedge{\cal R}^{1,1}_{h}\right)\wedge\omega^{n-2} &=& \sum_{\alpha,\beta,\gamma,\delta}\sum_{i,j} ({\cal R}_{h})^{i}_{\,j\alpha\bar\beta}({\cal R}_{h})^{j}_{\,i\gamma\bar\delta}\,\theta^{\alpha}\wedge\bar\theta^{\beta}\wedge\theta^{\gamma}\wedge\bar\theta^{\delta}\wedge\omega^{n-2}\\
                                                       &=& \frac{1}{n(n-1)}\sum_{i,j,\alpha,\beta} ({\cal R}_{h})^{i}_{\,j\alpha\bar\beta}({\cal R}_{h})^{j}_{\,i\beta\bar\alpha}\,\omega^{n}\\
                                                       && -\, \frac{1}{n(n-1)}\sum_{i,j,\alpha,\beta} ({\cal R}_{h})^{i}_{\,j\alpha\bar\alpha}({\cal R}_{h})^{j}_{\,i\beta\bar\beta}\,\omega^{n}\\
                                                       &=& \frac{1}{n(n-1)}\left(\lvert{\cal R}^{1,1}_{h}\lvert^{2} -  \lvert {\cal K}_{h}\lvert^{2}\right)\omega^{n}\,.
\end{eqnarray*}
\end{proof}

\begin{pro}\label{Formula I vs. J}
Let ${\mathfrak E}$ be a Higgs bundle over a compact K\"ahler manifold $X$ of dimension $n \geq 2$, and let ${\cal J}(h)$ and ${\cal I}(h)$ be the functionals on 
${\mathfrak E}$ defined by (\ref{Def. J}) and (\ref{Def. I}), resp. Then
\begin{eqnarray*}
{\cal I}(h) - {\cal J}(h) &=& 2\pi^{2}n(n-1)\int\limits_{X}\left(2\,c_{2}(E,h)-c_{1}(E,h)^{2}\right)\wedge\omega^{n-2}\\
 && +\, n(n-1)\int\limits_{X}{\rm tr}\left({\cal R}^{1,1}_{h}\wedge [\phi,\phi^{*}_{h}]\right)\wedge\omega^{n-2}\,.
\end{eqnarray*}
\end{pro}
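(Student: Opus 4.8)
The plan is to reduce everything to Lemma~\ref{lemma:tr-curv-sqr}, the decomposition ${\cal R}^{1,1}_{h} = R_{h} + [\phi,\phi^{*}_{h}]$ from \eqref{Formula eff. HS curv.}, and Chern--Weil theory. First I would combine the definitions \eqref{Def. J} and \eqref{Def. I} with Lemma~\ref{lemma:tr-curv-sqr} to write
\[
{\cal I}(h) - {\cal J}(h) = \frac{1}{2}\int_{X}\left(\lvert{\cal R}^{1,1}_{h}\rvert^{2} - \lvert{\cal K}_{h}\rvert^{2}\right)\omega^{n} = \frac{n(n-1)}{2}\int_{X}{\rm tr}\left({\cal R}^{1,1}_{h}\wedge{\cal R}^{1,1}_{h}\right)\wedge\omega^{n-2}\,,
\]
so the whole problem reduces to understanding the $(2,2)$-form ${\rm tr}({\cal R}^{1,1}_{h}\wedge{\cal R}^{1,1}_{h})$.

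Next I would substitute ${\cal R}^{1,1}_{h} = R_{h} + [\phi,\phi^{*}_{h}]$ and expand; since the trace of the wedge of two $\mathrm{End}\,E$-valued $2$-forms is symmetric, this yields
\[
{\rm tr}\left({\cal R}^{1,1}_{h}\wedge{\cal R}^{1,1}_{h}\right) = {\rm tr}(R_{h}\wedge R_{h}) + 2\,{\rm tr}\left(R_{h}\wedge[\phi,\phi^{*}_{h}]\right) + {\rm tr}\left([\phi,\phi^{*}_{h}]\wedge[\phi,\phi^{*}_{h}]\right)\,.
\]
The crux --- and the step I expect to demand the most care --- is that the purely quadratic Higgs term vanishes \emph{pointwise}. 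To see it, expand $[\phi,\phi^{*}_{h}] = \phi\wedge\phi^{*}_{h} + \phi^{*}_{h}\wedge\phi$; the integrability conditions $\phi\wedge\phi = 0$ and $\phi^{*}_{h}\wedge\phi^{*}_{h} = 0$ annihilate the two mixed square terms, leaving ${\rm tr}(\phi\wedge\phi^{*}_{h}\wedge\phi\wedge\phi^{*}_{h}) + {\rm tr}(\phi^{*}_{h}\wedge\phi\wedge\phi^{*}_{h}\wedge\phi)$, and the graded cyclicity of the trace --- cycling the leading $(1,0)$-form $\phi$ past the remaining degree-$3$ factor produces a sign $(-1)^{3}$ --- shows these two summands cancel. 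The same vanishing also lets me rewrite $2\,{\rm tr}(R_{h}\wedge[\phi,\phi^{*}_{h}]) = 2\,{\rm tr}({\cal R}^{1,1}_{h}\wedge[\phi,\phi^{*}_{h}])$, so that ${\rm tr}({\cal R}^{1,1}_{h}\wedge{\cal R}^{1,1}_{h}) = {\rm tr}(R_{h}\wedge R_{h}) + 2\,{\rm tr}({\cal R}^{1,1}_{h}\wedge[\phi,\phi^{*}_{h}])$.

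Finally I would invoke the Chern--Weil identity ${\rm tr}(R_{h}\wedge R_{h}) = 4\pi^{2}\bigl(2c_{2}(E,h) - c_{1}(E,h)^{2}\bigr)$ for the Chern--Weil representatives determined by $h$ (compatible with the normalization $c_{1}(E) = \tfrac{\sqrt{-1}}{2\pi}{\rm tr}\,R_{h}$ used above), wedge with $\omega^{n-2}$, integrate over $X$, and collect the constants $\tfrac{n(n-1)}{2}\cdot 4\pi^{2} = 2\pi^{2}n(n-1)$ and $\tfrac{n(n-1)}{2}\cdot 2 = n(n-1)$ to obtain the asserted formula. Apart from the pointwise vanishing of the quadratic Higgs term, the argument is pure bookkeeping built on Lemma~\ref{lemma:tr-curv-sqr} and Chern--Weil theory; no integration by parts or further analytic input is required.
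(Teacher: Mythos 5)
Your proposal is correct and follows essentially the same route as the paper: it rests on the pointwise vanishing of ${\rm tr}\left([\phi,\phi^{*}_{h}]\wedge[\phi,\phi^{*}_{h}]\right)$, the resulting identity ${\rm tr}\left({\cal R}^{1,1}_{h}\wedge{\cal R}^{1,1}_{h}\right)={\rm tr}\left(R_{h}\wedge R_{h}\right)+2\,{\rm tr}\left({\cal R}^{1,1}_{h}\wedge[\phi,\phi^{*}_{h}]\right)$, the Chern--Weil formula for ${\rm tr}\left(R_{h}\wedge R_{h}\right)$, and Lemma \ref{lemma:tr-curv-sqr}. The only difference is order of presentation (you invoke the lemma first and then expand, the paper expands first and applies the lemma at the end), which is immaterial.
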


\begin{proof}
We note first that by using the cyclic property of the trace and the conditions 
$\phi\wedge\phi=0$ and $\phi^{*}_{h}\wedge\phi^{*}_{h} = 0$, it follows that 
\begin{eqnarray*}
{\rm tr}\left([\phi,\phi^{*}_{h}]^{2}\right) &=& {\rm tr}\left(\phi\wedge\phi^{*}_{h}\wedge \phi\wedge\phi^{*}_{h} + \phi^{*}_{h}\wedge\phi\wedge\phi^{*}_{h}\wedge\phi\right)\\
                                                             &=&  {\rm tr}\left(\phi\wedge\phi^{*}_{h}\wedge \phi\wedge\phi^{*}_{h}\right) - {\rm tr}\left(\phi\wedge\phi^{*}_{h}\wedge\phi\wedge\phi^{*}_{h}\right) =0
\end{eqnarray*}
hence
\begin{eqnarray*}
{\rm tr}\left({\cal R}^{1,1}_{h}\wedge{\cal R}^{1,1}_{h}\right) &=& {\rm tr}\left(R_{h}^{2}+ [\phi,\phi^{*}_{h}]\wedge R_{h} + R_{h}\wedge[\phi,\phi^{*}_{h}] + [\phi,\phi^{*}_{h}]^{2}\right)\\
                                                                                &=& {\rm tr}\left(R_{h}^{2}\right) + 2\,{\rm tr}\left(R_{h}\wedge [\phi,\phi^{*}_{h}]\right) \,. 
\end{eqnarray*}
The first term in the right-hand side is equal to 
\begin{equation}
{\rm tr}(R_{h}^{2}) = 4\pi^{2}(2\,c_{2}(E,h)-c_{1}(E,h)^{2})\,. \label{Top. Invariant}
\end{equation}
Now, in the second term we can replace again $R_{h}$ by ${\cal R}^{1,1}_{h}$ and therefore, using (\ref{Top. Invariant}), we get 
\begin{equation}
{\rm tr}\left({\cal R}^{1,1}_{h}\wedge{\cal R}^{1,1}_{h}\right) = 4\pi^{2}\left(2\,c_{2}(E,h)-c_{1}(E,h)^{2}\right) + 2\,{\rm tr}\left({\cal R}^{1,1}_{h}\wedge [\phi,\phi^{*}_{h}]\right). \label{Key formula 1}
\end{equation}
The result then follows from Lemma \ref{lemma:tr-curv-sqr} after wedging the identity (\ref{Key formula 1}) with $\omega^{n-2}$ and integrating over $X$.
\end{proof}

\begin{remark}
In the case when $\phi\equiv0$, the identity in Proposition \ref{Formula I vs. J} reduces to a formula of Kobayashi \cite{Kobayashi}, namely
\begin{equation}
I(h) - J(h) = 2\pi^{2}n(n-1)\int\limits_{X}(2\,c_{2}(E,h)-c_{1}(E,h)^{2})\wedge\omega^{n-2}\,. \label{Classical formula}
\end{equation}
On the other hand, in the particular case $n={\dim\,}X=2$, the identity in Proposition \ref{Formula I vs. J} does not depend on $\omega$, although in general, it now depends explicitly on $h$ through the additional term
\[
n(n-1)\int\limits_{X}{\rm tr}\left({\cal R}^{1,1}_{h}\wedge [\phi,\phi^{*}_{h}]\right)\wedge\omega^{n-2}.
\]
This fact marks a fundamental difference between the functionals ${\cal J}(h)$ and ${\cal I}(h)$, as opposed to the holomorphic vector bundle case. Since the right-hand side of \eqref{Classical formula} is a topological invariant, $J(h)$ is essentially the same as $I(h)$, which is the Yang--Mills functional on a holomorphic gauge. Moreover, since ${\cal I}(h)$ is just a component of the Yang-Mills-Higgs functional, in the Higgs bundle case we also conclude that ${\cal J}(h)$ is fundamentally different from the Yang--Mills--Higgs functional. 
\end{remark}


\section{Variational properties of ${\cal J}(h)$}

We will now consider arbitrary 1-parameter families in the space ${\rm Herm}^{+}({\mathfrak E})$ of Hermitian metrics of ${\mathfrak E}$ in order to study some evolution properties of ${\cal J}(h)$. For $\epsilon >0$, $-\epsilon\le t\le \epsilon$, let $h_{t}$ be a 1-parameter family in ${\rm Herm}^{+}({\mathfrak E})$ with $h_{0} = h$. The tangent vectors $\partial_{t}h_{t}$ then define a 1-parameter family in ${\rm Herm}({\mathfrak E})$. Let us denote $\phi^{*}_{t} = \phi^{*}_{h_{t}}$, $\mathcal{R}^{1,1}_{h_{t}} = \mathcal{R}^{1,1}_{t}$ and let $v_{t} = h_{t}^{-1}\partial_{t}h_{t}$ be the associated 1-parameter family of Hermitian endomorphisms of $E$. Then we obtain that $\partial_{t}\phi^{*}_{t}=[\phi^{*}_{t},v_{t}]$ (see \cite{Cardona 1} for a proof of this). From (\ref{Formula eff. HS curv.}) we obtain
\begin{equation}
\partial_{t}{\cal R}^{1,1}_{t} = \partial_{t}R_{t} + [\phi,[\phi^{*}_{t},v_{t}]]  \nonumber
\end{equation}
and by applying the operator $\sqrt{-1}\Lambda$ on this expression we get a formula for the variation of the mean curvature ${\cal K}_{t}$, namely,
\begin{equation}
\partial_{t}{\cal K}_{t} = \partial_{t}K_{t} + \sqrt{-1}\Lambda[\phi,[\phi^{*}_{t},v_{t}]]\,. \label{Var. Mean Curv. R}
\end{equation}
Let us consider 
\begin{equation}
v = v_{t}|_{t = 0} = h_{t}^{-1}\partial_{t}h_{t}|_{t=0}\,. \label{Def. v}
\end{equation} 
The standard formula $\partial_{t}K_{t}|_{t=0} = \sqrt{-1}\Lambda D''D'_{h}v$ can be derived using coordinates\footnote{In fact, in \cite{Kobayashi}, Ch. IV, it is proved that $\partial_{t}(K_{t})^{i}_{j}\big\lvert_{t=0}=-\sum g^{\alpha\bar\beta}v^{i}_{\,j\alpha\bar\beta}$ where $v^{i}_{\,j\alpha\bar\beta}$ are interpreted as minus the coefficients of $D''D'_{h}v$, consequently $-\sum g^{\alpha\bar\beta}v^{i}_{\,j\alpha\bar\beta}$ are exactly the components of the endomorphism 
$\sqrt{-1}\Lambda D''D'_{h}v$.} and hence from (\ref{Var. Mean Curv. R}) we obtain     
\begin{equation}
\left.\partial_{t}{\cal K}_{t}\right|_{t=0} =  \sqrt{-1}\Lambda D''D'_{h}v+ \sqrt{-1}\Lambda[\phi,[\phi^{*}_{h},v]]\,. \label{Var. HS mean curv.}
\end{equation}
On the other hand, it follows from the definitions of ${\cal D}'_{h}$ and ${\cal D}''$ that
\begin{eqnarray*}
{\cal D}''{\cal D}'_{h}v &=& {\cal D}''(D'_{h}v + [\phi^{*}_{h},v])\\ 
                            &=& D''D'_{h}v + [\phi,D'_{h}v] + D'' [\phi^{*}_{h},v] + [\phi,[\phi^{*}_{h},v]]
\end{eqnarray*}
and by applying the $\sqrt{-1}\Lambda$ operator on the above expression, we obtain exactly the right-hand side of (\ref{Var. HS mean curv.}), that is
\begin{equation}
\left.\partial_{t}{\cal K}_{t}\right|_{t=0} = \sqrt{-1}\Lambda {\cal D}''{\cal D}'_{h}v\,, \label{Key formula 11}
\end{equation}
which can be seen as a natural extension to Hermitian Higgs bundles of the formula of Kobayashi for the first variation of the Chern mean curvature of a Hermitian vector bundle. 

\begin{lem}\label{lemma:formula D'D''}
Let $(\mathfrak{E},h)$ be a Hermitian Higgs bundle over a compact K\"ahler manifold $X$ and $v$ the endomorphism defined in (\ref{Def. v}), then we have 
\begin{equation}
\langle\sqrt{-1}\Lambda\mathcal{D}''\mathcal{D}'_{h}v,\mathcal{K}_{h}\rangle = \langle\mathcal{D}'_{h}v,\mathcal{D}'_{h}\mathcal{K}_{h}\rangle\,. \label{eq:identity-inner-prod}
\end{equation}
\end{lem}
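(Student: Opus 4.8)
The plan is to recognize \eqref{eq:identity-inner-prod} as an integration-by-parts identity, where $\sqrt{-1}\Lambda\mathcal{D}''$ plays the role of a formal adjoint of $\mathcal{D}'_{h}$ with respect to the $L^{2}$-Hermitian inner product \eqref{eq:L2}. On a compact Kähler manifold, the Kähler identities give a relation of the form $(\mathcal{D}'_{h})^{*} = \sqrt{-1}\Lambda\mathcal{D}'' + (\text{zeroth-order terms that cancel})$, or more robustly, one can work directly with Stokes' theorem applied to a suitable top-degree form and avoid quoting adjoint formulas altogether. Concretely, I would start from the identity $\int_X d\,\mathrm{tr}(\mathcal{D}'_h v \wedge \overline{*}_h \mathcal{K}_h) = 0$ — which holds since $X$ is compact and closed — and expand the integrand using the Leibniz rule for the Hitchin–Simpson connection, together with the fact that $\mathcal{K}_h$ is a $(0,0)$-form (an endomorphism) so that $\overline{*}_h\mathcal{K}_h$ is essentially $\mathcal{K}_h^{*_h}$ wedged with $\omega^{n-1}$-type factors. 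Tracking the bidegrees, only the $\mathcal{D}'_h$-part of $\mathcal{D}_h = \mathcal{D}'_h + \mathcal{D}''$ contributes a $d'$ when it hits $\mathcal{D}'_h v$ (since $\mathcal{D}'_h v$ already sits in the appropriate type to pair with $\mathcal{K}_h$ through $\overline{*}_h$), producing $\langle \mathcal{D}'_h \mathcal{D}'_h v\,,\,\ldots\rangle$-type terms, while on the $\mathcal{K}_h$ side the $\mathcal{D}''$-derivative pairs with $\mathcal{D}'_h v$ to reconstruct $\langle \sqrt{-1}\Lambda\mathcal{D}''\mathcal{D}'_h v, \mathcal{K}_h\rangle$ on one side and $\langle \mathcal{D}'_h v, \mathcal{D}'_h\mathcal{K}_h\rangle$ on the other.

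Alternatively — and this is probably the cleaner route for the paper's style — I would simply invoke the general formula for the adjoint of the Hitchin–Simpson operators with respect to \eqref{eq:L2}: for $\psi \in A^{p,q}(\mathrm{End}\,E)$ one has $\langle \mathcal{D}''\alpha, \psi\rangle = \langle \alpha, (\mathcal{D}'')^{*}\psi\rangle$ with $(\mathcal{D}'')^{*} = -\sqrt{-1}\,[\Lambda, \mathcal{D}'_h]$ by the Hitchin–Simpson analogue of the Kähler identities (this appears in \cite{Simpson}, and in the present notation the needed instance is $(\mathcal{D}')^{*}_h = \sqrt{-1}[\Lambda, \mathcal{D}'']$). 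Applied to $\alpha = \mathcal{D}'_h v \in A^{1,0}(\mathrm{End}\,E)$ and the $(0,0)$-form $\psi = \mathcal{K}_h$, the bracket $[\Lambda, \cdot]$ collapses because $\Lambda$ annihilates forms of degree $< 2$: thus $\langle \mathcal{D}'_h v, \mathcal{D}'_h \mathcal{K}_h\rangle = \langle (\mathcal{D}'_h)^{*}\mathcal{D}'_h v, \mathcal{K}_h\rangle = \langle \sqrt{-1}\Lambda\mathcal{D}''\mathcal{D}'_h v, \mathcal{K}_h\rangle$, where in the last step $\sqrt{-1}[\Lambda, \mathcal{D}'']\mathcal{D}'_h v = \sqrt{-1}\Lambda\mathcal{D}''\mathcal{D}'_h v$ since $\mathcal{D}'' \mathcal{D}'_h v$ is a $2$-form and $\mathcal{D}'_h v$ is a $1$-form so the other term vanishes. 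I should double-check the sign conventions and whether the relevant Kähler identity is stated for $\mathcal{D}'$ or $\mathcal{D}''$ in the convention of the paper, adjusting the placement of $\sqrt{-1}$ and the minus sign accordingly; the self-adjointness of $\Lambda$ (its $g$-adjoint being $L$) is what guarantees the two sides match up without stray constants.

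The main obstacle I anticipate is bookkeeping rather than anything deep: making sure the mixed nature of $\mathcal{D}'_h = D'_h + \phi^{*}_h$ and $\mathcal{D}'' = D'' + \phi$ (which do not split by type) does not introduce spurious Higgs-field terms. The key check is that in the pairing $\langle \mathcal{D}'_h v, \mathcal{D}'_h \mathcal{K}_h\rangle$ all four cross-terms $D'_h v$ vs.\ $D'_h \mathcal{K}_h$, $D'_h v$ vs.\ $[\phi^{*}_h, \mathcal{K}_h]$, $[\phi^{*}_h, v]$ vs.\ $D'_h \mathcal{K}_h$, $[\phi^{*}_h, v]$ vs.\ $[\phi^{*}_h, \mathcal{K}_h]$ reorganize — via the adjointness of $\mathrm{ad}_{\phi^{*}_h}$ (whose adjoint is $\mathrm{ad}_\phi$, up to sign, under the Hermitian inner product) and the type-counting that forces $\Lambda$ to act only on genuine $2$-forms — into exactly the four terms appearing in the expansion of $\langle \sqrt{-1}\Lambda \mathcal{D}'' \mathcal{D}'_h v, \mathcal{K}_h\rangle$ as seen through \eqref{Var. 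HS mean curv.}. Once the type bookkeeping is organized, the identity should fall out by a single application of Stokes/Kähler identities with no residual terms, since $\mathcal{K}_h$ being a section of $\mathrm{End}\,E$ (degree zero) kills every term where $\Lambda$ would have to act on something of degree $0$ or $1$.
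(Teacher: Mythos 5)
Your proposal is correct via its main (second) route, but it proves the lemma by a genuinely different argument than the paper does. You derive \eqref{eq:identity-inner-prod} in one stroke from Simpson's K\"ahler identities for the Hitchin--Simpson operators, $(\mathcal{D}'_{h})^{*}=\sqrt{-1}[\Lambda,\mathcal{D}'']$, applied to the induced Higgs bundle $\mathrm{End}\,E$ (whose induced operators are exactly $\mathcal{D}'_{h}=D'_{h}+[\phi^{*}_{h},\cdot\,]$ and $\mathcal{D}''=D''+[\phi,\cdot\,]$, as used throughout the paper); since $\mathcal{D}'_{h}v$ has no $(1,1)$-component, $[\Lambda,\mathcal{D}'']\mathcal{D}'_{h}v=\Lambda\mathcal{D}''\mathcal{D}'_{h}v$, and the sign is the one consistent with Kobayashi's identity $\langle\sqrt{-1}\Lambda D''D'_{h}v,K_{h}\rangle=\langle D'_{h}v,D'_{h}K_{h}\rangle$, to which yours must (and does) specialize when $\phi\equiv 0$. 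The paper instead argues at a lower level: it splits both sides by type, quotes Kobayashi's identity for the Chern part, and verifies the remaining zeroth-order Higgs identity $\langle\sqrt{-1}\Lambda[\phi,[\phi^{*}_{h},v]],\mathcal{K}_{h}\rangle=\langle[\phi^{*}_{h},v],[\phi^{*}_{h},\mathcal{K}_{h}]\rangle$ pointwise, using cyclicity of the trace and the Hermitian symmetry of $\mathcal{K}_{h}$; that pointwise identity is precisely the zeroth-order content of the K\"ahler identity you invoke. Your route buys brevity and a conceptual explanation (formal adjointness) at the price of citing Simpson, while the paper's is elementary and self-contained modulo Kobayashi. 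Two small corrections: in $\langle\mathcal{D}'_{h}v,\mathcal{D}'_{h}\mathcal{K}_{h}\rangle$ the cross-terms do not ``reorganize''---they vanish outright, since $D'_{h}v$ is of type $(1,0)$ while $[\phi^{*}_{h},\mathcal{K}_{h}]$ is of type $(0,1)$ (and symmetrically), so each side has only two surviving terms; and your first (Stokes) sketch starts from a degenerate integrand, because $\overline{*}_{h}\mathcal{K}_{h}$ is already a form of top degree $2n$, so $\mathrm{tr}\left(\mathcal{D}'_{h}v\wedge\overline{*}_{h}\mathcal{K}_{h}\right)$ is a $(2n+1)$-form and vanishes identically; a genuine Stokes argument would have to pair $v$ against $\overline{*}_{h}$ of a $1$-form, and even then it would only account for the first-order parts, the zeroth-order Higgs terms still requiring the pointwise computation.
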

\begin{proof} 
By definition of the $L^{2}$-inner product, the right-hand side of \eqref{eq:identity-inner-prod} is equal to
\[
\langle D'_{h}v,D'_{h}\mathcal{K}_{h}\rangle + \langle [\phi^{*}_{h},v],[\phi^{*}_{h},\mathcal{K}_{h}]\rangle\,.
\]
On the other hand, we have seen that 
\begin{equation}
\sqrt{-1}\Lambda\mathcal{D}''\mathcal{D}'_{h}v = \sqrt{-1}\Lambda D''D'_{h}v +  \sqrt{-1}\Lambda[\phi,[\phi^{*}_{h},v]]\,. \nonumber
\end{equation} 
Hence, the left-hand side of \eqref{eq:identity-inner-prod} is equal to
\[
\langle \sqrt{-1}\Lambda D''D'_{h}v, \mathcal{K}_{h}\rangle + \langle \sqrt{-1}\Lambda[\phi,[\phi^{*}_{h},v]],\mathcal{K}_{h}\rangle\,.
\]
Now, the identity $\langle\sqrt{-1}\Lambda D''D'_{h}v,K\rangle = \langle D'_{h}v,D'_{h} K\rangle$ is proved in \cite{Kobayashi}. Hence, to prove the lemma, it is sufficient to prove that the following Hermitian inner products coincide
\begin{equation}
\langle \sqrt{-1}\Lambda[\phi,[\phi^{*}_{h},v]],\mathcal{K}_{h}\rangle =  \langle [\phi^{*}_{h},v],[\phi^{*}_{h},\mathcal{K}_{h}]\rangle\,.  \label{eq:identity K}
\end{equation}
The cyclic property of the trace and the Hermitian nature of $\mathcal{K}_{h}$ imply that the local expressions $\phi |_{\mathcal{U}} = \sum \phi_{\alpha} d z^{\alpha}$ and $\phi^{*}_{h} |_{\mathcal{U}} = \sum \phi^{*}_{\bar{\alpha}} d \bar{z}^{\alpha}$ satisfy the following identity
\[
\sum_{\alpha,\beta}\mathrm{tr}\left(g^{\alpha\bar{\beta}}[\phi_{\alpha},[\phi^{*}_{\bar{\beta}},v]]\mathcal{K}_{h}\right) = \sum_{\alpha,\beta}\mathrm{tr}\left( g^{\alpha\bar{\beta}}[\phi^{*}_{\bar{\beta}},v] \left[\phi^{*}_{\bar{\alpha}},\mathcal{K}_{h}\right]^{*}_{h}\right).
\]
which are precisely the density terms in the Hermitian inner products \eqref{eq:L2} for each of the terms in \eqref{eq:identity K}. Then the identity \eqref{eq:identity K} follows.
\end{proof}

\begin{pro}\label{Ev. Pro.}
Let ${\mathfrak E}$ be a Higgs bundle over a compact K\"ahler manifold $X$ and let $h_{t}$, with $-\epsilon \le t\le \epsilon$, be a 1-parameter family in ${\rm Herm}^{+}({\mathfrak E})$ such that $h_{0} = h$. Then 
\begin{equation}
\partial_{t}{\cal J}(h_{t})\lvert_{t=0} = \langle\mathcal{D}'_{h}v,\mathcal{D}'_{h}{\cal K}_{h}\rangle \,. \label{eq:critical-points}
\end{equation} 
\end{pro}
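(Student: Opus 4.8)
The plan is to differentiate $\mathcal{J}(h_t)$ under the integral sign and then to recognize the resulting expression as the stated $L^{2}$-pairing, using the two ingredients already at our disposal: the first-variation formula \eqref{Key formula 11} for the Hitchin--Simpson mean curvature, and the adjointness identity of Lemma \ref{lemma:formula D'D''}.

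The first step is to rewrite the integrand of \eqref{Def. J} so that its $t$-dependence is transparent. Since $\mathcal{K}_{h_t}$ is Hermitian with respect to $h_t$, its pointwise norm is $\lvert\mathcal{K}_{h_t}\rvert^{2} = {\rm tr}\left(\mathcal{K}_{h_t}\circ\mathcal{K}_{h_t}\right)$, so that
\[
\mathcal{J}(h_t) = \frac{1}{2}\int_X {\rm tr}\left(\mathcal{K}_{h_t}\circ\mathcal{K}_{h_t}\right)\omega^{n}\,.
\]
The advantage is that the integrand is now built from the single $t$-dependent section $\mathcal{K}_{h_t}$ of $\mathrm{End}\,E$ by composition and trace, operations which are independent of the metric; in particular no extra terms arise from the variation of the Hodge and inner-product structures carried by $h_t$. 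As $h_t$ is a smooth family on the compact manifold $X$, differentiation under the integral is justified, and the cyclicity of the trace gives
\[
\partial_t\mathcal{J}(h_t)|_{t=0} = \int_X {\rm tr}\left(\mathcal{K}_h\circ\partial_t\mathcal{K}_t|_{t=0}\right)\omega^{n}\,.
\]

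The second step is to identify this integral with an $L^{2}$-Hermitian pairing. Because $\mathcal{K}_h$ is $h$-Hermitian, the Hermitian symmetry of $\mathcal{K}_h$ together with the cyclicity of the trace gives the pointwise identity ${\rm tr}\left(\mathcal{K}_h\circ\psi\right) = (\psi,\mathcal{K}_h)$ for every $\mathrm{End}\,E$-valued $0$-form $\psi$; hence, by \eqref{eq:L2},
\[
\partial_t\mathcal{J}(h_t)|_{t=0} = \langle\partial_t\mathcal{K}_t|_{t=0},\,\mathcal{K}_h\rangle
\]
(modulo the multiplicative constants implicit in \eqref{Def. J} and \eqref{eq:L2}). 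Substituting the first variation of the mean curvature \eqref{Key formula 11} turns the right-hand side into $\langle\sqrt{-1}\Lambda\mathcal{D}''\mathcal{D}'_h v,\,\mathcal{K}_h\rangle$, and Lemma \ref{lemma:formula D'D''} identifies this with $\langle\mathcal{D}'_h v,\,\mathcal{D}'_h\mathcal{K}_h\rangle$, which is the asserted formula.

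Since the substantive analytic content has already been isolated in \eqref{Key formula 11} and in Lemma \ref{lemma:formula D'D''}, the only point that genuinely requires care is the first step: one must ensure that the moving metric $h_t$ contributes nothing extra under differentiation. This is exactly why it is worthwhile to express $\lvert\mathcal{K}_{h_t}\rvert^{2}$ as ${\rm tr}\left(\mathcal{K}_{h_t}^{2}\right)$ rather than keeping it as the pointwise inner product $(\mathcal{K}_{h_t},\mathcal{K}_{h_t})$ with respect to the varying metric; once that rewriting is in place, the computation reduces entirely to the two quoted identities.
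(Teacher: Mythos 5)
Your proposal is correct and follows essentially the same route as the paper's own proof: differentiate $\lvert\mathcal{K}_{t}\rvert^{2}={\rm tr}(\mathcal{K}_{t}\circ\mathcal{K}_{t})$ under the integral, substitute the variation formula \eqref{Key formula 11} to get $\partial_{t}\mathcal{J}(h_{t})\lvert_{t=0}=\langle\sqrt{-1}\Lambda\mathcal{D}''\mathcal{D}'_{h}v,\mathcal{K}_{h}\rangle$, and conclude with Lemma \ref{lemma:formula D'D''}. Your explicit remark that the Hermitian symmetry of $\mathcal{K}_{h_t}$ removes any contribution from the $t$-dependence of the adjoint/inner-product structure simply makes precise a step the paper leaves implicit, and your parenthetical about normalization constants is consistent with the paper's own conventions.
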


\begin{proof}
It follows from (\ref{Key formula 11}) that
\[
\partial_{t}\lvert{\cal K}_{t}\lvert^{2}\big\lvert_{t=0} = 2\,{\rm tr}(\partial_{t}{\cal K}_{t}\circ{\cal K}_{t})\big\lvert_{t=0} = 2\,{\rm tr}(\sqrt{-1}\Lambda {\cal D}''{\cal D}'_{h}v\circ{\cal K}_{h})\,,
\]
from which it readily follows that
\[
\partial_{t}{\cal J}(h_{t})\lvert_{t=0} = \langle\sqrt{-1}\Lambda\mathcal{D}''\mathcal{D}'_{h}v,\mathcal{K}_{h}\rangle\,.
\]
Proposition \ref{Ev. Pro.} is then a consequence of Lemma \ref{lemma:formula D'D''}.
\end{proof}

\begin{remark}
Equation \eqref{eq:critical-points}  in Proposition \ref{Ev. Pro.} is a generalization of the equation for the critical points of the functional $J(h)$. In fact, if $\phi\equiv 0$ the second term vanishes and we have 
\begin{equation}
\partial_{t}J(h_{t})\lvert_{t=0} = \langle D'_{h}v,D'_{h}{\cal K}_{h}\rangle \nonumber
\end{equation}
which is a key step in \cite{Kobayashi} to prove that the critical points of $J(h)$ are the Hermitian metrics whose Chern mean curvature is parallel with respect to the induced Chern connection. In analogy to \cite{Kobayashi}, we conclude the following result.
\end{remark}

\begin{thm}
Let ${\mathfrak E}$ be a Higgs bundle over a compact K\"ahler manifold $X$ and let ${\cal J}(h)$ be the functional on ${\mathfrak E}$ defined by (\ref{Def. J}). Then, the Euler--Lagrange equations of $\mathcal{J}(h)$ are 
\begin{equation}
\mathcal{D}_{h}\mathcal{K}_{h} =0\,.
\end{equation}
Consequently, a Hermitian metric on $\mathfrak{E}$ is a critical point of $\mathcal{J}(h)$ if and only if the corresponding Hitchin--Simpson mean curvature is parallel with respect to the Hitchin--Simpson connection. 
\end{thm}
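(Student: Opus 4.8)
The plan is to derive the Euler--Lagrange equations directly from the first variation formula \eqref{eq:critical-points} established in Proposition \ref{Ev. Pro.}, and then interpret the resulting condition in terms of the Hitchin--Simpson connection. First I would note that a Hermitian metric $h$ is a critical point of $\mathcal{J}$ precisely when $\partial_{t}\mathcal{J}(h_{t})|_{t=0} = \langle\mathcal{D}'_{h}v,\mathcal{D}'_{h}\mathcal{K}_{h}\rangle = 0$ for every admissible variation, i.e., for every $v$ in $\mathrm{Herm}(\mathfrak{E})$ regarded as a tangent vector at $h$ (using the identification of $T_{h}\mathrm{Herm}^{+}(\mathfrak{E})$ with $\mathrm{Herm}(\mathfrak{E})$ recalled in Section 2). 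Integrating by parts, one rewrites $\langle\mathcal{D}'_{h}v,\mathcal{D}'_{h}\mathcal{K}_{h}\rangle = \langle v,(\mathcal{D}'_{h})^{*}\mathcal{D}'_{h}\mathcal{K}_{h}\rangle$, where $(\mathcal{D}'_{h})^{*}$ denotes the formal $L^{2}$-adjoint; since this must vanish for all Hermitian $v$, and $(\mathcal{D}'_{h})^{*}\mathcal{D}'_{h}\mathcal{K}_{h}$ is itself Hermitian (as $\mathcal{K}_{h}$ is Hermitian, cf. equation \eqref{eq:Hermitian-curv-comp.} and the discussion following it), one concludes $(\mathcal{D}'_{h})^{*}\mathcal{D}'_{h}\mathcal{K}_{h} = 0$, and then pairing back with $\mathcal{K}_{h}$ forces $\mathcal{D}'_{h}\mathcal{K}_{h} = 0$.

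Next I would upgrade $\mathcal{D}'_{h}\mathcal{K}_{h} = 0$ to the full statement $\mathcal{D}_{h}\mathcal{K}_{h} = 0$. Recall $\mathcal{D}_{h} = \mathcal{D}'_{h} + \mathcal{D}''$, so it remains to show $\mathcal{D}''\mathcal{K}_{h} = 0$ as well. The key point is that $\mathcal{K}_{h}$ is a Hermitian endomorphism, and $\mathcal{D}''\mathcal{K}_{h}$ and $\mathcal{D}'_{h}\mathcal{K}_{h}$ are adjoint to each other under $h$: taking the Hermitian conjugate $*_{h}$ exchanges the operators $\mathcal{D}'_{h}$ and $\mathcal{D}''$ when acting on Hermitian sections (this follows from the definitions $\mathcal{D}'_{h} = D'_{h} + \phi^{*}_{h}$, $\mathcal{D}'' = D'' + \phi$, the unitarity of the Chern connection $D_{h}$, and the defining relation $h(\phi^{*}_{h}s,s') = h(s,\phi s')$). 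Concretely, $(\mathcal{D}'_{h}\mathcal{K}_{h})^{*}_{h} = \mathcal{D}''(\mathcal{K}_{h}^{*}_{h}) = \mathcal{D}''\mathcal{K}_{h}$, so the vanishing of one forces the vanishing of the other. Hence $\mathcal{D}_{h}\mathcal{K}_{h} = 0$, which by definition of the Hitchin--Simpson connection says exactly that $\mathcal{K}_{h}$ is parallel. The converse is immediate: if $\mathcal{D}_{h}\mathcal{K}_{h} = 0$ then in particular $\mathcal{D}'_{h}\mathcal{K}_{h} = 0$, so \eqref{eq:critical-points} vanishes identically and $h$ is critical.

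I expect the main obstacle to be justifying the passage from ``$\langle\mathcal{D}'_{h}v,\mathcal{D}'_{h}\mathcal{K}_{h}\rangle = 0$ for all Hermitian $v$'' to ``$\mathcal{D}'_{h}\mathcal{K}_{h} = 0$'', since the variations $v$ are constrained to be Hermitian rather than arbitrary $\mathrm{End}\,E$-valued sections. This is handled exactly as in Kobayashi \cite{Kobayashi} for the holomorphic case: one uses that $(\mathcal{D}'_{h})^{*}\mathcal{D}'_{h}\mathcal{K}_{h}$ is Hermitian (because $\mathcal{K}_{h}$ is, and both $\mathcal{D}'_{h}$ and its adjoint behave well under conjugation), so testing against all Hermitian $v$ is enough to kill it; then $0 = \langle(\mathcal{D}'_{h})^{*}\mathcal{D}'_{h}\mathcal{K}_{h},\mathcal{K}_{h}\rangle = \|\mathcal{D}'_{h}\mathcal{K}_{h}\|^{2}$ gives the claim. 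The only genuinely new ingredient beyond \cite{Kobayashi} is the bookkeeping of the Higgs terms $\phi,\phi^{*}_{h}$ in verifying that conjugation swaps $\mathcal{D}'_{h}$ and $\mathcal{D}''$, which is a short direct computation using \eqref{adj vs. cc} and the fact that $\phi$ is holomorphic while $\phi^{*}_{h}$ is anti-holomorphic.
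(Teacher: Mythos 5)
Your argument is correct in substance and follows essentially the same two steps as the paper: the first variation formula \eqref{eq:critical-points} of Proposition \ref{Ev. Pro.}, plus the observation that Hermitian conjugation relates $\mathcal{D}'_{h}\mathcal{K}_{h}$ and $\mathcal{D}''\mathcal{K}_{h}$, so that $\mathcal{D}'_{h}\mathcal{K}_{h}=0$ is equivalent to $\mathcal{D}_{h}\mathcal{K}_{h}=0$. The only real divergence is how you extract $\mathcal{D}'_{h}\mathcal{K}_{h}=0$ at a critical point. The paper does it in one stroke: test with the linear family $h_{t}=h+t\mathcal{K}_{h}$ (admissible for small $t$), for which $v=\mathcal{K}_{h}$, and read off $0=\partial_{t}\mathcal{J}(h_{t})|_{t=0}=\|\mathcal{D}'_{h}\mathcal{K}_{h}\|^{2}$. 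You instead integrate by parts and assert that $(\mathcal{D}'_{h})^{*}\mathcal{D}'_{h}\mathcal{K}_{h}$ is Hermitian, so that testing against Hermitian $v$ forces it to vanish pointwise, and only then pair with $\mathcal{K}_{h}$. That intermediate assertion is the soft spot: conjugation intertwines $\mathcal{D}'_{h}$ with the operator $D''-[\phi,\cdot\,]$ rather than with anything whose composition with $(\mathcal{D}'_{h})^{*}$ is manifestly $(\mathcal{D}'_{h})^{*}\mathcal{D}'_{h}$ again, so the claim that this Laplacian-type operator preserves Hermitian endomorphisms is a commutation property you have not verified (and it is not simply quoted from \cite{Kobayashi}). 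Fortunately it is also unnecessary: your final identity $0=\langle\mathcal{K}_{h},(\mathcal{D}'_{h})^{*}\mathcal{D}'_{h}\mathcal{K}_{h}\rangle=\|\mathcal{D}'_{h}\mathcal{K}_{h}\|^{2}$ already follows from criticality tested against the single Hermitian direction $v=\mathcal{K}_{h}$, which is precisely the paper's shortcut; dropping the detour through the pointwise Euler--Lagrange equation makes your proof complete. One further minor remark: the identity $(\mathcal{D}'_{h}\mathcal{K}_{h})^{*}_{h}=\mathcal{D}''\mathcal{K}_{h}$ that you invoke (as does the paper) is sensitive to conventions---with the componentwise adjoint of \eqref{adj vs. cc} one finds $(D'_{h}\mathcal{K}_{h})^{*}_{h}=D''\mathcal{K}_{h}$ but $[\phi^{*}_{h},\mathcal{K}_{h}]^{*}_{h}=-[\phi,\mathcal{K}_{h}]$---yet since vanishing of an expression and of its conjugate (with either sign) are equivalent, the conclusion $\mathcal{D}'_{h}\mathcal{K}_{h}=0\iff\mathcal{D}_{h}\mathcal{K}_{h}=0$, and hence the theorem, is unaffected.
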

\begin{proof} 
Since $\mathcal{K}_{h}$ takes values in $\mathrm{End}\, E_{-}$, it follows that $\left(\mathcal{D}'_{h}\mathcal{K}_{h}\right)^{*} = \mathcal{D}''\mathcal{K}_{h}$. Hence $\mathcal{D}'_{h}\mathcal{K}_{h} = 0$ if and only if $\mathcal{D}_{h}\mathcal{K}_{h} = 0$. Now, for the special linear 1-parameter family of the form $h_{t} = h + t\mathcal{K}_{h}$ with $t$ sufficiently small, it follows that $v = \mathcal{K}_{h}$. From this and (\ref{eq:critical-points}) it follows that $\partial_{t}{\cal J}(h_{t})\lvert_{t=0} = 0$ if and only if $\mathcal{D}'_{h}\mathcal{K}_{h} = 0$, and the claim follows.
\end{proof}

In conclusion, we can see that most of our results (with the exception of Proposition \ref{Formula I vs. J}) are natural extensions of well-known properties of the functional $J(h)$ in the holomorphic vector bundle case. However, it is still an outstanding problem to perform a thorough study of the flow properties of $\mathcal{J}(h)$ near critical points. We plan to return to such a question in a forthcoming article. 


\end{document}